\newcommand{\C}{\mathbb{C}}
\newcommand{\Z}{\mathbb{Z}}
\newcommand{\R}{\mathbb{R}}
\DeclareMathOperator{\curl}{curl}
\DeclareMathOperator{\divv}{div}
\newtheorem{thm}{Theorem}[section]
\newtheorem{cor}[thm]{Corollary}
\newtheorem{lem}[thm]{Lemma}
\newtheorem{prop}[thm]{Proposition}
\newtheorem{defi}[thm]{Definition}
\newtheorem{rmk}[thm]{Remark}
\DeclareRobustCommand\widecheck[1]{{\mathpalette\@widecheck{#1}}}
\def\@widecheck#1#2{%
    \setbox\z@\hbox{\m@th$#1#2$}%
    \setbox\tw@\hbox{\m@th$#1%
       \widehat{%
          \vrule\@width\z@\@height\ht\z@
          \vrule\@height\z@\@width\wd\z@}$}%
    \dp\tw@-\ht\z@
    \@tempdima\ht\z@ \advance\@tempdima2\ht\tw@ \divide\@tempdima\thr@@
    \setbox\tw@\hbox{%
       \raise\@tempdima\hbox{\scalebox{1}[-1]{\lower\@tempdima\box
\tw@}}}%
    {\ooalign{\box\tw@ \cr \box\z@}}}
\begin{document}

\title{\sc Spatial non-locality 
	of the Maxwell system on periodic structures}

\author[1]{Kirill Cherednichenko}
\author[1]{Serena D'Onofrio}
\affil[1]{Department of Mathematical Sciences, University of Bath, Claverton Down, Bath, BA2 7AY, United Kingdom}

\maketitle

\begin{abstract}
\noindent For 
$\varepsilon>0,$ we 
analyse the Maxwell system of equations of electromagnetism on $\varepsilon$-periodic sets
$S^\varepsilon\subset{\mathbb R}^3.$ Assuming that a family of Borel measures $\mu^\varepsilon,$ such that 
${\rm supp}(\mu^\varepsilon)=S^\varepsilon,$ is obtained by $\varepsilon$-contraction of a fixed periodic measure $\mu,$ and for right-hand sides $f^\varepsilon\in L^2(\R^3, d\mu^\varepsilon),$ we prove order-sharp norm-resolvent convergence estimates for the solutions of the system. 

\vskip 0.3cm

\noindent{\bf Keywords:} 
Maxwell system $\cdot$ Norm-resolvent estimates $\cdot$ Periodic measures $\cdot$ Singular structures

\end{abstract}


\section{Introduction}\label{sec1}
The aim of this work is to obtain norm-resolvent homogenisation estimates for the stationary system of Maxwell equations of electromagnetism. We are interested in the geometric setting of singular periodic structures described by arbitrary periodic Borel measures. In our earlier work \cite{CDO22}, we derived such estimates for a family of problems describing the magnetic field $H^\varepsilon:$ 
\begin{equation}\label{eq_paper2}
\curl A(\cdot /\varepsilon) \curl H^\varepsilon+H^\varepsilon=f, \qquad f\in\bigl[L^2(\R^3,d\mu^\varepsilon)\bigr]^3,\quad {\rm div}f=0,\qquad \varepsilon>0,
\end{equation}
where the $\varepsilon$-periodic 
measure $\mu^\varepsilon$ is an $\varepsilon$-rescaling of a $Q$-periodic ($Q:=[0,1)^3$) measure, and $A$ is a symmetric, bounded, and uniformly positive definite $Q$-periodic matrix-valued function.
 The problem \eqref{eq_paper2}, understood in the sense of an appropriate integral identity, is the resolvent form of the Maxwell system of equations in electromagnetism in the absence of external currents. For each $\varepsilon,$ the function $H^\varepsilon$ represents the divergence-free magnetic field, 
the coefficient matrix $A$ is the inverse of the dielectric permittivity relative to vacuum, and the medium is assumed to be non-magnetic (i.e., its magnetic permeability is assumed to be equal to that of vacuum.) The vector function $f$ on the right-hand side is a fictitious ``magnetic charge density'', which is a mathematical artefact of the operator resolvent formulation.

Our approach \cite{CDO22} to the analysis of the asymptotic behaviour of solutions to (\ref{eq_paper2}) was based on the study of the family of operators, parametrised by the quasimomentum $\theta$, obtained from \eqref{eq_paper2} by an appropriate generalisation of the classical Floquet transform accounting for the arbitrary choice of the measure $\mu.$ The related strategy consisted in constructing an asymptotic approximation in powers of $\varepsilon$ for each value of $\theta,$ analysing the homogenisation corrector as a function of $\varepsilon$ and $\theta$, and obtaining an estimate uniform with respect to $\theta$ for the remainder. The principal tool for the latter was a $\theta$-uniform Poincar\'{e}-type inequality in appropriate Sobolev spaces of quasiperiodic functions associated with periodic Borel measures. The result of \cite{CDO22} allowed us to estimate the magnetic field $H^\varepsilon$ and the magnetic induction directly from the solution of the related homogenised equation. Different estimates were proved for the electric field and electric displacement, where the leading order of the approximation contains finite rapidly oscillating terms.


Consider a $Q$-periodic Borel measure $\mu$ on $\R^3$
such that $\mu(Q)=1$. For each $\varepsilon>0,$ we define the ``$\varepsilon$-scaling" of $\mu,$ i.e. the $\varepsilon$-periodic measure $\mu^\varepsilon$ given by 
$\mu^\varepsilon({B})=\varepsilon^3 \mu(\varepsilon^{-1}{B})$ for all Borel sets ${B}\subset \R^3,$ so that $\mu^1\equiv\mu.$
Henceforth, we denote by $C_0^\infty({\mathbb R}^3)$ the set of of infinitely smooth functions with compact support in ${\mathbb R}^3$ and by $L^2({\mathbb R}^3, d\mu^\varepsilon)$ the space of functions with values in ${\mathbb C}^3$ that are square integrable over ${\mathbb R}^3$ with respect to the measure $\mu^\varepsilon$ --- this is the same as the space $[L^2(\R^3,d\mu^\varepsilon)]^3$ in (\ref{eq_paper2}). Throughout the paper, for vectors $a, b\in{\mathbb C}^2$ we denote by $a\cdot b$ their standard (sesquilinear) Euclidean inner product, and define all function spaces over the filed ${\mathbb C}.$

 Our aim here is to analyse the asymptotic behaviour, as $\varepsilon\to 0$, of solution pairs $(D^\varepsilon, B^\varepsilon)$ for the system of Maxwell equations ({\it cf.} \cite{Cessenat})
\begin{equation}
\label{Max_general}
	\curl (A (\cdot/\varepsilon) D^\varepsilon)+B^\varepsilon=0,
\qquad \curl(\widetilde{A}(\cdot/\varepsilon)B^\varepsilon)-D^\varepsilon=J.
\end{equation}
Here, the divergence-free vector function $J\in L^2(\R^3,d\mu^\varepsilon)$ represents the current density, and the matrix-valued functions $A,$ $\widetilde{A}$ which are 
	assumed to be $Q$-periodic, symmetric, positive definite, and continuously differentiable,
represent the inverse of the relative dielectric permittivity and the inverse relative magnetic permeability, respectively, see \cite[Appendix A]{CDO22} for non-dimensionalising the Maxwell system. 

For each $\varepsilon>0,$ the system (\ref{Max_general}) is understood in the variational sense: one looks for $(D^\varepsilon,B^\varepsilon)\in{\mathcal X}\oplus{\mathcal X},$ where ${\mathcal X}$ in an appropriate space of divergence-free functions, such that
\begin{equation}
\int_{{\mathbb R}^3}\bigl(A (\cdot/\varepsilon) D^\varepsilon\cdot\curl\varphi_1+B^\varepsilon\cdot\varphi_2\bigr)d\mu^\varepsilon\\[0.25em]
+\int_{{\mathbb R}^3}\bigl(B^\varepsilon\cdot\curl\varphi_2-D^\varepsilon\cdot\varphi_1\bigr)d\mu^\varepsilon=\int_{\mathbb R}J\cdot\varphi_2\,d\mu^\varepsilon,
\label{Var_form}
\end{equation}
for all vectors $(\varphi_1, \varphi_2)$
each of whose components is an element of a dense subset of ${\mathcal X}.$ 
The vector $(B^\varepsilon, D^\varepsilon)$ in (\ref{Var_form}) represents the (time-independent) amplitude of a time-harmonic field for which the variation of the action functional\footnote{Here the action functional is an integral whose spatial part is expressed in terms of the measure $d\mu^\varepsilon.$} for the time-dependent system of Maxwell equations  \cite{Cessenat}, \cite{Jackson} vanishes.

Our goal is to obtain norm-resolvent estimates for the difference between the solution to \eqref{Max_general} for small $\varepsilon$ and 
	the solution to a suitable ``homogenised" problem that replaces the formally suggested system 
\begin{equation}
	\label{Max_general_homo}
		\curl\bigl(A^{\rm hom} {D}^{\rm hom}\bigr)+{B}^{\rm hom}=0,\qquad 
		\curl(\widetilde{A}^{\rm hom}{B}^{\rm hom})-{D}^{\rm hom}=J,
\end{equation} 
where 
	$A^{\rm hom},$ $\widetilde{A}^{\rm hom}$ are matrices representing the inverse of the ``homogenised" electric permittivity and magnetic permeability.

While the ``limit" system (\ref{Max_general_homo}) is provided by the standard two-scale asymptotic expansion, it turns out to be an incorrect effective model if one were to require norm-resolvent (or even strong) convergence as $\varepsilon\to0.$ 
We will show that the correct replacement of (\ref{Max_general_homo}) involves an $\varepsilon$-dependent pseudodifferential operator, which can be viewed as a singular perturbation of  (\ref{Max_general_homo}).



Our first step in tackling the system \eqref{Max_general} is to rewrite it in an equivalent symmetric form, for
which we follow the approach of \cite[Section 3]{Suslina2005}.
Labelling $\sqrt{A(\cdot/\varepsilon)}D^\varepsilon := \mathfrak{D}^\varepsilon,$ we write, for each $\varepsilon>0,$
\begin{equation}
	\label{Max_general_g}
\sqrt{A(\cdot/\varepsilon)}\curl\bigl\{\widetilde{A}\curl\bigl(\sqrt{A(\cdot/\varepsilon)} \mathfrak{D}^\varepsilon\bigr)\bigr\}+ \mathfrak{D}^\varepsilon=-\sqrt{A(\cdot/\varepsilon)}J, 
\end{equation}
Denote by  $C^1_0(\R^3)$ the set of continuously differentiable vector functions with compact support in $\R^3$ and define the space $H^1_{\curl A^{1/2}}(\R^3, d\mu^\varepsilon)$ as the closure of the set of pairs 
$\bigl\{\bigl(\phi, \curl (\sqrt{A(\cdot/\varepsilon)}\phi)\bigr):\  \phi\in [C^1_0(\R^3)]^3\bigr\}$
in the direct sum $L^2(\R^3,d\mu^\varepsilon)\oplus L^2(\R^3,d\mu^\varepsilon)$. 
We say that 
$\bigl(\mathfrak{D}^\varepsilon, \curl\bigl(\sqrt{A(\cdot/\varepsilon)} \mathfrak{D}^\varepsilon\bigr)\bigr)\in H^1_{\curl A^{1/2}}(\R^3, d\mu^\varepsilon)$
is a solution of \eqref{Max_general_g} if
\begin{equation}
\label{Max_general_g_weak}
\begin{aligned}
\int_{\R^3}\widetilde{A}(\cdot/\varepsilon)\curl\bigl(\sqrt{A(\cdot/\varepsilon)} \mathfrak{D}^\varepsilon\bigr)\cdot{\curl \bigl(\sqrt{A(\cdot/\varepsilon)}\varphi\bigr)}+\int_{\R^3} \mathfrak{D}^\varepsilon\cdot{\varphi}&=-\int_{\R^3}\!\sqrt{A(\cdot/\varepsilon)}J \cdot{\varphi}\quad
\forall \varphi\in \bigl[C_0^\infty({\mathbb R}^3)\bigr]^3.
\end{aligned}
\end{equation}
Clearly, the set of test functions in the identity (\ref{Max_general_g_weak}) can be equivalently replaced by $H^1_{\curl A^{1/2}}(\R^3, d\mu^\varepsilon).$ 

For every $\varepsilon>0,$ the left-hand side of \eqref{Max_general_g_weak} defines an inner product in $H^1_{\curl A^{1/2}}(\R^3,d\mu^\varepsilon)$. The right-hand side is linear bounded functional on $H^1_{\curl A^{1/2}}(\R^3,d\mu^\varepsilon)$ with respect to this inner product, hence the existence and uniqueness of the solution of \eqref{Max_general_g_weak} are a consequence of the Riesz representation theorem. In what follows we study the resolvent of the operator $\mathcal{A}^\varepsilon$ with domain
\begin{equation}
	\begin{aligned}
&{\rm dom} (\mathcal{A}^\varepsilon)=\biggl\{ u \in L^2(\R^3,d\mu^\varepsilon): \; \exists \curl\bigl(\sqrt{A(\cdot/\varepsilon)}u\bigr)\ {\rm such\ that}\\[0.4em]
&\int_{\R^3}\widetilde{A}(\cdot/\varepsilon)\curl\bigl(\sqrt{A(\cdot/\varepsilon)}u\bigr)\cdot{\curl\bigl(\sqrt{A(\cdot/\varepsilon)} \varphi\bigr)}+\int_{\R^3} u\cdot{\varphi}=-\int_{\R^3}\sqrt{A(\cdot/\varepsilon)}g \cdot{\varphi} \qquad \forall \varphi\in \bigl[C_0^\infty({\mathbb R}^3)\bigr]^3
\\[0.3em]
& \qquad \qquad\qquad\qquad{\rm for \; some} \; g\in L^2(\R^3, d\mu^\varepsilon), \; \divv g=0\biggr\}
\end{aligned}
	\label{dom_A_epsilon_cur}
\end{equation}
and defined by the formula
$\mathcal{A}^\varepsilon u= -\sqrt{A(\cdot/\varepsilon)}g - u,$
where the divergence-free $g\in L^2(\R^3, d\mu^\varepsilon)$
 and $u\in \rm{dom}(\mathcal{A}^\varepsilon)$ are linked\footnote{It is not difficult to show that for each $u\in {\rm dom}({\mathcal A}^\varepsilon)$ there exists only one $g$ with the property described in (\ref{dom_A_epsilon_cur}).}  as in (\ref{dom_A_epsilon_cur}).
Note that although in general for a given function $u\in L^2(Q,d\mu)$ there exists more than one pair $(u,\curl A^{1/2}u)\in H^1_{\curl A^{1/2}}(\R^3,d\mu^\varepsilon),$ it is easy to see that for each $u \in \rm{dom}(\mathcal{A}^\varepsilon)$ there exists a unique $\curl A^{1/2}u$ with the property described in \eqref{dom_A_epsilon_cur}.
Note that for each $\varepsilon>0$ the domain $\rm{dom}(\mathcal{A}^\varepsilon)$ is dense in 
$L^2(\R^3,d\mu^\varepsilon)\cap\{u: \divv\bigl(A^{-1/2}(\cdot/\varepsilon)u\bigr)=0\}.$
 Indeed, by the definition of $\rm{dom} (\mathcal{A}^\varepsilon)$, if $g\in L^2(\R^3,d\mu^\varepsilon)$, $\divv g=0$, and $u,v \in \rm{dom} (\mathcal{A}^\varepsilon)$ are such that $\mathcal{A}^\varepsilon u+ u= -A^{1/2}g$ and $\mathcal{A}^\varepsilon v+ v= -u$, then one has
\[
\int_{\R^3} |u|^2 d\mu^\varepsilon=\int_{\R^3}\sqrt{A(\cdot/\varepsilon)} g\cdot{v}d\mu^\varepsilon.
\]
It follows that for a vector $\sqrt{A(\cdot/\varepsilon)}g$ orthogonal to $\rm{dom} (\mathcal{A}^\varepsilon)$ one has $u=0$, and therefore $g=0,$ as required. Furthermore,
 the operator $\mathcal{A}^\varepsilon$ is symmetric with vanishing defect indices, hence it is self-adjoint.

Throughout the paper, all integrals and differential operators, unless indicated otherwise, are understood appropriately with respect to the measure $\mu.$ We use the notation $e_\varkappa$ for the exponent $\exp({\rm i}\varkappa\cdot y),$ $y\in Q,$ $\varkappa\in[-\pi,\pi)^3,$ and a similar notation $e_\theta$ for the exponent $\exp({\rm i}\theta\cdot x),$ $x\in{\mathbb R}^3,$ $\theta\in\varepsilon^{-1}[-\pi,\pi)^3.$ Furthermore, we denote by $C^1_{\#}$ denotes the set of continuously differentiable $Q$-periodic functions and by $C^1_{\#,0}$ its subset of functions having zero mean over $Q.$ 
Finally, the notation
$H^1_{\#}$ is used for the the set of $Q$-periodic functions in $H^1_{\rm loc}(\R^3,d\mu)$ and $H^1_{\#, 0}$ for its subset of functions with zero mean over $Q.$




\section{Sobolev spaces of quasiperiodic functions}

The aim of this section is to describe the functional analytic framework for our study of the problem (\ref{Max_general}).
As a particular case of the notion of ``weak differentiability" of square-integrable vector functions with respect an arbitrary Borel measure, 
we introduce a suitable generalisation of the 
classical curl operator. In what follows, $\mu$ is an arbitrary $Q$-periodic Borel measure. Denote by $L^2(Q, d\mu)$ the space of functions with values in ${\mathbb C}^3$ that are square integrable over $Q$ with respect to the measure 
$\mu.$

\begin{defi}
	\label{defiH1curl}
The space $H^1_{\curl A^{1/2}}$
is defined as the closure of the set 
\begin{equation}
\bigl\{\bigl(\phi,\curl(A^{1/2}\phi\bigr),\ \phi\in[C^1_\#]^3 
\bigr\}
\label{Hcurldef}
\end{equation}
 in the product $L^2(Q, d\mu)\times L^2(Q, d\mu).$
\end{defi}

Elements of the closure of (\ref{Hcurldef}) are pairs $(u,v)$ of $\mu$-measurable functions on $Q$ such that
\begin{equation}\label{propH1curl}
\exists\,\{\phi_n\}_{n=1}^\infty\subset\bigl[C^1_\#\bigr]^3: \quad \quad \int_Q |\phi_n-u|^2 d\mu\stackrel{n\to\infty}{\longrightarrow}0 \quad \quad \int_Q\bigl|\curl(A^{1/2}\phi_n)-v\bigr|^2 d\mu\stackrel{n\to\infty}{\longrightarrow}0.
\end{equation}
The element\footnote{For a general measure $\mu,$ a vector $u\in L^2(Q, d\mu)$ has multiple curls with respect to $\mu.$ In particular, any vector $g\in L^2(Q,d\mu)$ with the property
\begin{equation*}
\exists\,\{\phi_n\}\subset[C^1_\#]^3: \quad
\int_Q |\phi_n|^2d\mu\stackrel{n\to\infty}{\longrightarrow}0, \quad \quad \int_Q\bigl|g-\curl(A^{1/2}\phi_n)\bigr|^2d\mu\stackrel{n\to\infty}{\longrightarrow}0
\end{equation*}
is a curl with respect to $\mu$ of the zero vector, {\it i.e.} a ``curl of zero".} $v$ in \eqref{propH1curl} is referred to as an $A^{1/2}$-curl of $u$ with respect to $\mu.$ 
We will often use the notation $\curl(A^{1/2}u)$ without indicating the measure $\mu$ explicitly, assuming that it is clear from the context what the measure is.



We now extend to the vector setting (see {\it e.g.} \cite{CD18} for the scalar case) the definition of the Sobolev  space of quasiperiodic functions with respect to 
the measure $\mu.$

\begin{defi}
	\label{HcurlAdef}
For each $\varkappa \in [-\pi, \pi)^3=:Q'$, the space $H^1_{\curl A^{1/2},\varkappa}(Q, d\mu)$ is defined as the closure of the set 
$\bigl\{ \bigl(e_\varkappa \phi, \curl (e_\varkappa A^{1/2}\phi)\bigr): \phi\in [C^1_\#]^3\bigr\}$ 
with respect to the norm of $L^2(Q, d\mu)\times L^2(Q, d\mu).$ For $(u,v) \in H^1_{\curl A^{1/2}, \varkappa},$ we denote by $\curl(e_\varkappa A^{1/2}u)$ the second element $v$ in the pair, which we sometimes refer to as a ``$\varkappa$-curl of $u$." We will continue using the notation 
$H^1_{\curl A^{1/2}}$ (see Definition \ref{defiH1curl}) for the space $H^1_{\curl A^{1/2}, \varkappa}$ with $\varkappa=0.$
\end{defi}

Note that there may be different elements in $H^1_{\curl A^{1/2},\varkappa}(Q, d\mu)$ with the same first component.  Indeed, for any pair $(u,v) \in H^1_{\curl A^{1/2},\varkappa}$ and a vector function $w$ obtained as the limit in 
$L^2(Q, d\mu)$ of $\curl(e_\varkappa A^{1/2}\phi_n)$ for a sequence $\{\phi_n\}\subset [C_\#^1]^3$ converging to zero in $L^2(Q, d\mu)$, the element $(u, v+w)$ is also in $H^1_{\curl A^{1/2},\varkappa}.$ Furthermore, there is a natural one-to-one map between $H^1_{\curl A^{1/2},\varkappa}(Q, d\mu)$ and $H^1_{\curl A^{1/2}}(Q, d\mu)$. In fact, for any pair $(u,v)\in H^1_{\curl A^{1/2},\varkappa}(Q, d\mu)$ one has $(\overline{e}_\varkappa u, \overline{e}_\varkappa(v-{\rm i}\varkappa \times A^{1/2}u))\in H^1_{\curl A^{1/2}}(Q, d\mu)$, since
\[
\curl (A^{1/2}\phi_n)=\overline{e}_\varkappa\curl(e_\varkappa A^{1/2}\phi_n)-{\rm i}\varkappa\times A^{1/2}\phi_n
\]
for all sequences $\{\phi_n\}\subset [C^1_\#]^3$ such that $e_\varkappa \phi_n \to \overline{e}_\varkappa u$ and $\curl (e_\varkappa A^{1/2}\phi_n)\to\curl (A^{1/2}u)$  in $L^2(Q,d\mu)$ as $n\to\infty.$ Conversely, for every $(\widetilde{u}, \widetilde{v})\in H^1_{\curl A^{1/2}}(Q, d\mu)$ one has $\widetilde{v}=\overline{e}_\varkappa(v-{\rm i}\varkappa\times A^{1/2}u)$ for some $(u,v)\in H^1_{\curl A^{1/2},\varkappa}(Q, d\mu)$.

For every $\varkappa \in Q',$  we analyse the operator $\mathcal{A}_\varkappa$ with domain
\begin{equation}
\begin{aligned}
{\rm dom} (\mathcal{A}_\varkappa)= &\Bigl\{ u \in L^2(Q,d\mu): \; \exists \curl ( e_\varkappa A^{1/2} u) \; {\rm such\;  that} \;\\[0.4em]
& \int_Q\widetilde{A}\curl(e_\varkappa A^{1/2} u)\cdot\overline{\curl(e_\varkappa A^{1/2}\varphi)} d\mu+\int_Q u\cdot\overline{\varphi}d\mu=-\int_Q A^{1/2} G\cdot\overline{\varphi} \qquad \forall \varphi \in [C^1_\#]^3\\[0.4em]
&\quad\qquad\qquad\qquad{\rm for \; some} \; G\in L^2(Q, d\mu), \; \overline{e}_\varkappa\divv (e_\varkappa G)=0\Bigr\},
\end{aligned}
\label{uG_link}
\end{equation}
and defined by 
$\mathcal{A}_\varkappa u= -A^{1/2}G-u,$
where $G\in L^2(Q,d\mu)$ and $u\in {\rm dom} (\mathcal{A}_\varkappa)$ are linked by (\ref{uG_link}). By an argument similar to that for $\mathcal{A}^\varepsilon,$ we infer that ${\rm dom}(\mathcal{A}_\varkappa)$ is dense in 
$
L^2(Q,d\mu)\cap\{u: \overline{e}_\varkappa\divv(e_\varkappa A^{-1/2} u)=0\}
$
and $\mathcal{A}_\varkappa$ is self-adjoint. 

\section{Floquet transform}
\label{sec2}
Here we recall the definition of the Floquet transform, see \cite{CDO22}. For $\varepsilon>0$, the $\varepsilon$-Floquet transform $\mathcal{F}_\varepsilon $ is defined on 
functions $u\in L^2(\R^3,d\mu^\varepsilon),$ with compact support by the formula
\[
(\mathcal{F}_\varepsilon u)(y, \theta)= \left(\frac{\varepsilon^2}{2\pi}\right)^{3/2}\sum_{n\in \Z^3} u(\varepsilon y+\varepsilon n) \exp (-{\rm i}\varepsilon n\cdot \theta), \qquad y\in Q, \quad \theta \in \varepsilon^{-1}Q'=\varepsilon^{-1}[-\pi,\pi)^3.
\]
The mapping $\mathcal{F}_\varepsilon$ preserves the norm and can be extended to a unitary transform from $L^2(\R^3,d\mu^\varepsilon)$ to $L^2(\varepsilon^{-1}Q'\times Q, d\theta\times d\mu),$ for which we keep the same notation ${\mathcal F}_\varepsilon$ and the term ``$\varepsilon$-Floquet transform". Its inverse 
is given by
\begin{equation}
(\mathcal{F}_\varepsilon^{-1}g)(x)=({2\pi})^{-3/2} \int_{\varepsilon^{-1}Q'} g\biggl(\frac{x}{\varepsilon}, \theta\biggr) d\theta,\quad x\in{\mathbb R}^3, \qquad g\in L^2(Q\times\varepsilon^{-1}Q', d\mu\times d\theta),
\label{inverseFloquet}
\end{equation}
where for each $\theta\in \varepsilon^{-1}Q'$ the function $g\in L^2(Q\times\varepsilon^{-1}Q', d\mu\times d\theta)$ is extended to the whole of ${\mathbb R}^3$ so that 
$g(z, \theta)=\widetilde{g}(z,\theta)\exp({\rm i}z\cdot\theta),$ 
$z\in{\mathbb R}^3,$
where $\widetilde{g}(\cdot, \theta)$ is $Q$-periodic.

As a result of applying the transform ${\mathcal F}_\varepsilon$ to the operator ${\mathcal A}_\varepsilon$ of the problem (\ref{Max_general_g}), we obtain the following representation for the resolvent of ${\mathcal A}_\varepsilon.$

\begin{prop}
For each $\varepsilon>0$ the following unitary equivalence between the resolvent of $\mathcal{A}^\varepsilon$ and the direct integral of the resolvents of $\mathcal{A}_{\varepsilon\theta},$  $\theta\in \varepsilon^{-1}Q',$ holds:
\begin{equation*}
(\mathcal{A}^\varepsilon+I)^{-1}= \mathcal{F}_\varepsilon^{-1}\biggl(\int^\oplus_{\varepsilon^{-1}Q'} e_\varkappa (\varepsilon^{-2} \mathcal{A}_{\varepsilon\theta} +I)^{-1} \overline{e}_{\varepsilon\theta} d\theta\biggr)\mathcal{F}_\varepsilon,
\end{equation*}
where $e_{\varepsilon\theta},$ $\overline{e}_{\varepsilon\theta}$ represent the operators of multiplication by $e_{\varepsilon\theta},$ $\overline{e}_{\varepsilon\theta},$ respectively.
\end{prop}

\begin{proof}[Sketch of proof]
The argument is similar to the one discussed in \cite{CDO22} for the Maxwell system in the absence of external currents. Let us consider the solution 
$(\mathfrak{D}^\varepsilon, \curl(\sqrt{A(\cdot/\varepsilon)} \mathfrak{D}^\varepsilon)) \in H^1_{\curl A^{1/2}}(Q,d\mu)$ of the problem \eqref{Max_general_g} with $J\in[C^\infty_0(\R^3)]^3.$ For such $\mathfrak{D}^\varepsilon$ we denote the ``periodic amplitude" of its Floquet transform as follows:
\begin{equation}
{\mathfrak D}^\varepsilon_\theta(y):=\overline{e}_{\varepsilon\theta}\mathcal{F}_\varepsilon\mathfrak{D}^\varepsilon= \left( \frac{\varepsilon^2}{2\pi}\right)^{3/2} \sum_{n\in \Z^3} \mathfrak{D}^\varepsilon(\varepsilon y+\varepsilon n)\exp\bigl(-{\rm i}(\varepsilon y+\varepsilon n)\cdot \theta\bigr), \quad y\in Q.
\label{Floquet_def1}
\end{equation}
For any choice of $\curl(A^{1/2} \mathfrak{D}^\varepsilon),$ in particular for the one in \eqref{Max_general_g}, the function
\[
\curl(e_{\varepsilon\theta}A^{1/2}{\mathfrak D}^\varepsilon_\theta)(y)=\varepsilon \left( \frac{\varepsilon^2}{2\pi}\right)^{3/2} \sum_{n\in \Z^3} \curl(A^{1/2} \mathfrak{D}^\varepsilon)(\varepsilon y+\varepsilon n)\exp\bigl(-{\rm i}(\varepsilon y+\varepsilon n)\cdot \theta\bigr), \quad y\in Q,
\]
is an ``$A^{1/2}$-curl'' of $e_{\varepsilon\theta}{\mathfrak D}^\varepsilon_\theta$ in sense that $(e_{\varepsilon\theta}{\mathfrak D}^\varepsilon_\theta, \curl (e_{\varepsilon\theta}A^{1/2}{\mathfrak D}^\varepsilon_\theta))\in H^1_{\curl  A^{1/2},\varepsilon\theta}$. 

Under the Floquet transform, the formulation (\ref{Max_general_g_weak}) is converted into
\begin{equation}
\begin{aligned}
\varepsilon^{-2}\int_Q\widetilde{A}\curl(e_\varkappa A^{1/2} {\mathfrak D}^\varepsilon_\theta)\cdot\overline{\curl(e_\varkappa A^{1/2} \varphi)} d\mu+\int_Q {\mathfrak D}^\varepsilon_\theta\cdot\overline{\varphi} d\mu =-\int_Q A^{1/2}{J}_\theta^\varepsilon\cdot\overline{\varphi} d\mu\qquad\ \ \forall \varphi\in\bigl[C_\#^1\bigr]^3
\end{aligned}
\label{Max_ft_weak}
\end{equation}
where ${J}_\theta^\varepsilon:=\overline{e}_{\varepsilon\theta}\mathcal{F}_\varepsilon J$ is such that $\overline{e}_{\varepsilon\theta}\divv (e_{\varepsilon\theta}{J}_\theta^\varepsilon)=0,$ in the sense of the identity
\begin{equation}
\label{div_ekG}
\int_Q e_{\varepsilon\theta}{J}_\theta^\varepsilon\cdot\overline{\nabla(e_{\varepsilon\theta}\varphi)}=0 \qquad \forall\varphi\in C^1_\#.
\end{equation}
The density of $C^1_0(\R^3)$ in $L^2(\R^3,d\mu)$ implies the claim.
\end{proof}

	For each $\varepsilon>0,$ $\theta\in\varepsilon^{-1}Q',$ we study the behaviour of the solution ${\mathfrak D}^\varepsilon_\theta$ to the problem (\ref{Max_ft_weak}), with ${J}_\theta^\varepsilon$ replaced by an arbitrary ${G}\in L^2(Q,d\mu)$ satisfying the condition ({\it cf.} (\ref{div_ekG})) 
$\overline{e}_{\varepsilon\theta}\divv (e_{\varepsilon\theta}{G})=0.$
For brevity, we also write the mentioned problem in the ``strong" form 
\begin{equation}
\label{Max_ft}
\varepsilon^{-2} A^{1/2} \overline{e}_{\varepsilon\theta}\curl\widetilde{A} \curl\bigl(e_{\varepsilon\theta}A^{1/2} {\mathfrak D}^\varepsilon_\theta\bigr)+{\mathfrak D}^\varepsilon_\theta= -A^{1/2}{G}.
\end{equation}

\section{An analogue of Helmholtz decomposition}
\label{Helm_sec}

The Helmholtz decomposition of the space of square-integrable functions is an important tool for the analysis of the Maxwell system, see {\it e.g.} \cite{JKO}. In the present work we develop its version for spaces of quasiperiodic functions with respect to an arbitrary periodic Borel measure 
$\mu,$ taking into account the structure of the problem \eqref{Max_ft}. 

Before formulating the next proposition, we recall that the notions  of a gradient of a quasiperiodic function that is square integrable with respect the measure $\mu$ and of the associated Sobolev spaces $H^1_\varkappa(Q,d\mu)$ can be defined, see \cite{CD18} for details. Furthermore, we say that $v\in L^2(Q, d\mu)$ is solenoidal, or $\overline{e}_\varkappa\divv (e_\varkappa {A}^{-1/2})$-free, if 
\begin{equation}
\int_Q {A}^{-1/2} e_\varkappa v\cdot\overline{\nabla (e_\varkappa \varphi)} d\mu=0\qquad \forall \varphi \in C^1_{\#,0}.
\label{def_solenoidal}
\end{equation}

We begin by introducing a problem for a scalar ``potential" $\Phi_w^{(\varkappa)},$ whose appropriate gradient plays the role of the irrotational part of a function $w$ in the classical Helmholtz decomposition.
\begin{prop}
\label{Phi_prop}
For each $\varkappa\in{\mathbb R}^3,$ $w\in L^2(Q, d\mu),$ there is a unique solution $\Phi_w^{(\varkappa)}\in H^1_{\#,0}$ to the problem
\begin{equation}
\overline{e}_\varkappa\divv\bigl({A}^{-1} \nabla\bigl(e_\varkappa \Phi_w^{(\varkappa)}\bigr)\bigr)= \overline{e}_\varkappa\divv(e_\varkappa {A}^{-1/2} w),
\label{Phi_kappa}
\end{equation}
understood in the sense of the integral identity
\begin{equation}\label{phi_eq_kneq0weak}
\int_Q {A}^{-1} \nabla\bigl(e_\varkappa \Phi_w^{(\varkappa)}\bigr) \cdot\overline{\nabla (e_\varkappa \varphi)} =\int_Q {A}^{-1/2} e_\varkappa w\cdot\overline{\nabla (e_\varkappa \varphi)} \qquad \forall \varphi \in C^1_{\#,0}.
\end{equation}
\end{prop}
\begin{proof}
The left-hand side of \eqref{phi_eq_kneq0weak} defines a sesquilinear form that is bounded and coercive on $H^1_{\#,0}.$ The coercivity follows from the Poincar\'{e}-type inequality discussed in \cite[Section 5]{CD18} for the scalar case. Bearing in mind that the right-hand side of \eqref{phi_eq_kneq0weak} is a bounded linear functional on $H^1_{\#,0},$ we apply the Riesz representation theorem to infer the existence and uniqueness of a solution.
\end{proof}


	 For each $\varkappa\in{\mathbb R}^3,$ consider the vector $\Psi_\varkappa$ with entries in $H^1_{\#,0}$ such that\footnote{In the case when $\mu$ is the Lebesgue measure, equation (\ref{eq_Psikappa}) takes the form 
	 $(\nabla+{\rm i}\varkappa)\cdot A^{-1}((\nabla+{\rm i}\varkappa)\Psi_\varkappa+I)=0.$ Recall that the widely known unit-cell problem in homogenisation (for the matrix $A^{-1}$) is $\nabla\cdot A^{-1}(\nabla\Psi+I)=0$ --- see, e.g., \cite[Section 1.2]{JKO}.}
\begin{equation}
\label{eq_Psikappa}
\overline{e}_\varkappa\divv\bigl\{ A^{-1}\bigl(\nabla(e_\varkappa\Psi_\varkappa) + e_\varkappa I\bigr)\bigr\}=0,
\end{equation}
where $(\nabla(e_\varkappa\Psi_\varkappa))_{ij}:= (e_\varkappa\Psi_\varkappa)_{j,i},$ $i,j=1,2,3,$ so that the $j$-th component of $\Psi_\varkappa$ coincides with $\Phi_{A^{-1/2}e_j}^{(\varkappa)},$ denoting by $e_j$ the vectors with components $(e_j)_{i}=\delta_{ij}.$ 
For each $w\in L^2(Q, d\mu),$ 
we write
\begin{equation}
\label{decokneq0}
w=\widetilde{w}_{\varkappa}+{A}^{-1/2}\bigl(\overline{e}_\varkappa\nabla(e_\varkappa\Psi_\varkappa)+I\bigr)\gamma+{A}^{-1/2}\overline{e}_\varkappa\nabla\bigl(e_\varkappa \Phi_w^{(\varkappa)}\bigr),
\end{equation}
where $\widetilde{w}_{\varkappa}$ is solenoidal, and the constant $\gamma$ (which depends on $w$ and $\varkappa$ and is sometimes denoted by $\gamma_w$ in what follows) is chosen so that
\begin{equation}
\int_Q A^{-1/2}\widetilde{w}_{\varkappa}=0.
\label{mean_wtilde}
\end{equation}
The representation (\ref{decokneq0}) is unique. Indeed, Proposition \ref{Phi_prop} provides a unique function $\Phi_w^{(\varkappa)}$ with zero mean such that $w-{A}^{-1/2}\overline{e}_\varkappa\nabla(e_\varkappa \Phi_w^{(\varkappa)})$ is solenoidal, and the constant $\gamma$ is determined uniquely from the requirement (\ref{mean_wtilde}) by the formula
\begin{equation}
\gamma=\bigl(A^{\rm hom}_\varkappa\bigr)^{-1}
\int_Q\Bigl\{A^{-1/2}w-A^{-1}\overline{e}_\varkappa\nabla\bigl(e_\varkappa \Phi_w^{(\varkappa)}\bigr)\Bigr\},
\label{c_form}
\end{equation}
where 
\begin{equation}
	\label{A_hom_hat}
		A^{\rm hom}_\varkappa
	:=\int_Q A^{-1}\bigl(\overline{e}_\varkappa\nabla(e_\varkappa\Psi_\varkappa)+I\bigr),\qquad \varkappa\in Q'.
\end{equation}

\begin{prop}
	\label{lemma_boundAhom}
	There exist constants $\nu_1,$ $\nu_2>0$ independent of $\varkappa\in Q'$, such that 
	\begin{equation}
		\label{bound_A_hom}
		\nu_1I\leq A^{\rm hom}_{\varkappa}\leq \nu_2I.
	\end{equation}
\end{prop}
\begin{proof}
	 The argument is similar to that for the classical Voigt-Reiss inequality (see \cite[Section 1.6]{JKO}). The quadratic form of the matrix $A^{\rm hom}_{\varkappa}$ defined by (\ref{A_hom_hat}) is given by
	\begin{equation}
		A^{\rm hom}_{\varkappa} \lambda\cdot \lambda=\inf_{\psi\in H^1_{\#,0}} \int_Q A^{-1}\bigl(\overline{e}_\varkappa\nabla(e_\varkappa\psi)+\lambda\bigr)\cdot{\bigl(e_\varkappa\overline{\nabla(e_\varkappa\psi)}+\lambda\bigr)}, \quad \lambda\in \R^3,
		\label{lambda_form}
	\end{equation}
due to the fact that the minimum in (\ref{lambda_form}) is attained on $\psi=\Psi_\varkappa\cdot\lambda.$ 
	Setting $\psi=0$ in the integral on the right-hand side of (\ref{lambda_form}), the upper bound in (\ref{bound_A_hom}) follows 
	with $\nu_2:= \|A^{-1}\|_{L^\infty(Q)}.$  
	
	To prove the lower bound, we use a variational characterisation for $\big(A^{\rm hom}_{\varkappa}\big)^{-1},$ namely 
	\begin{equation}
		\label{A_homkappa_inverse}
		\big(A^{\rm hom}_{\varkappa}\big)^{-1} \xi \cdot \xi=
		\inf_{{\mathcal X}_{\varkappa, \xi}}\Vert v\Vert^2_{L^2(Q,d\mu)}, \qquad  \xi \in \R^3,
	\end{equation}
	where 
	\[
	{\mathcal X}_{\varkappa,\xi}:=\biggl\{v\in L^2(Q, d\mu): \overline{e}_\varkappa{\rm div}\bigl(e_\varkappa A^{-1/2}v\bigr)=0, \int_QA^{-1/2}v=\xi\biggr\}.
	\]
	Indeed, for $\lambda=\bigl(A^{\rm hom}_{\varkappa}\bigr)^{-1}\xi$ the function  $v_{\varkappa,\lambda}:={A}^{-1/2}(\overline{e}_\varkappa\nabla(e_\varkappa\Psi_\varkappa)+I)\lambda$ is an element of ${\mathcal X}_{\varkappa,\xi}$ and
 satisfies
	\begin{equation*}
		\int_Qv_{\varkappa,\lambda}\cdot\overline{\varphi}=0 \qquad \forall\varphi\in{\mathcal X}_{\varkappa, 0}.
	\end{equation*}
Hence, it is the (unique) minimiser for (\ref{A_homkappa_inverse}). Combining this with \eqref{lambda_form} yields the claim.

	The required lower bound is now obtained by noting that the right-hand side of \eqref{A_homkappa_inverse} is a continuous function of $\varkappa\in Q',$ $\xi\in\{\xi\in{\mathbb R}^3: |\xi|=1\}$ and hence attains a maximum value. We then set $\nu_1$ in \eqref{bound_A_hom} to be the inverse of this maximum value.
\end{proof}

\begin{rmk}
Both $\widetilde{w}_{\varkappa}$ and the second term in (\ref{decokneq0}) are solenoidal, and the latter accounts for the mean value of $A^{1/2}w$ so (\ref{mean_wtilde}) holds. Notice also that the three terms in (\ref{decokneq0}) are pairwise orthogonal in 
$L^2(Q, d\mu)$.
\end{rmk}


\section{Poincar\'{e}-type inequality}

Here we prove a version of the Poincar\'{e} inequality for functions in 
 $H^1_{\curl A^{1/2},\varkappa}(Q,d\mu),$ see Definition \ref{HcurlAdef}. 
Before formulating it, we make a simple observation concerning a subset of $H^1_{\curl A^{1/2},\varkappa}(Q,d\mu).$ 
\begin{lem}
\label{zero_lemma}
For any $\eta\in H^1_{\#,0}(Q, d\mu),$ the zero vector is one of the curls of $A^{-1/2}\overline{e}_\varkappa\nabla(e_\varkappa \eta),$ {\it i.e.} one has 
$\bigl(A^{-1/2}\overline{e}_\varkappa\nabla (e_\varkappa \eta), 0\bigr)\in H^1_{\curl A^{1/2},\varkappa}(Q, d\mu).$
\end{lem}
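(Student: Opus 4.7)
The plan is to exhibit a sequence in the generating set of $H^1_{\curl A^{1/2},\kappa}(Q,d\mu)$ whose second component vanishes identically (so the limit trivially has zero as its second component), and whose first component converges to $A^{-1/2}\overline{e_\kappa}\nabla(e_\kappa\eta)$. The analytic input is essentially the classical identity $\curl\nabla\equiv 0$, combined with the density of smooth scalars in $H^1_{\#,0}(Q,d\mu)$ established in the authors' earlier work \cite{CDO18}.

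Concretely, I first select a sequence $\eta_n\in C^\infty_{\#,0}(Q)$ with $\eta_n\to\eta$ in $L^2(Q,d\mu)$ and
\[
\overline{e_\kappa}\nabla(e_\kappa\eta_n) \;=\; \nabla\eta_n+i\kappa\,\eta_n \;\longrightarrow\; \overline{e_\kappa}\nabla(e_\kappa\eta)\quad\text{in }L^2(Q,d\mu),
\]
which is furnished by the definition of $H^1_{\#,0}(Q,d\mu)$ in \cite{CDO18}. I then set
\[
\phi_n \;:=\; A^{-1/2}\bigl(\nabla\eta_n+i\kappa\,\eta_n\bigr) \;=\; A^{-1/2}\overline{e_\kappa}\nabla(e_\kappa\eta_n).
\]
Under the standing assumption (for this section) that $A$ is a smooth $Q$-periodic scalar matrix-function, $\phi_n$ and $A^{1/2}\phi_n$ lie in $C^\infty_\#(Q,\C^3)$, so each pair $\bigl(e_\kappa\phi_n,\curl(e_\kappa A^{1/2}\phi_n)\bigr)$ belongs to the generating family used to define $H^1_{\curl A^{1/2},\kappa}(Q,d\mu)$.

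The decisive observation is the direct computation
\[
e_\kappa A^{1/2}\phi_n \;=\; e_\kappa\bigl(\nabla\eta_n+i\kappa\,\eta_n\bigr) \;=\; \nabla(e_\kappa\eta_n),
\]
which gives $\curl(e_\kappa A^{1/2}\phi_n) = \curl\nabla(e_\kappa\eta_n) = 0$ identically in $n$: the second components of the approximating pairs are all exactly zero. Combined with the $L^2$-convergence of the first components (inherited from the first step and the uniform boundedness of $A^{-1/2}$), passing to the limit in $L^2(Q,d\mu)\oplus L^2(Q,d\mu)$ yields $\bigl(A^{-1/2}\overline{e_\kappa}\nabla(e_\kappa\eta),0\bigr)\in H^1_{\curl A^{1/2},\kappa}(Q,d\mu)$, as required.

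The only subtle point is notational: one must carefully track how the quasiperiodic factor $e_\kappa$ and the matrix $A^{1/2}$ interact, and notice that they combine so that $e_\kappa A^{1/2}\phi_n$ becomes an exact gradient of a smooth scalar, whose classical curl vanishes (and hence so does its curl in the measure-theoretic sense). No Poincar\'e inequality, compactness or spectral input is needed here; the lemma serves as the ``measure-theoretic $\curl\nabla=0$'' sanity check that feeds into the Helmholtz-type decomposition of the previous subsection.
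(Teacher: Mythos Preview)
Your proof is correct. It is in fact more explicit than the paper's own argument: the paper simply observes that $A^{-1/2}\overline{e_\kappa}\nabla(e_\kappa\eta)$ is irrotational in the weak sense of \eqref{def_irrotational}, i.e.\ that $\int_Q \nabla(e_\kappa\eta)\cdot\overline{\curl(e_\kappa\phi)}\,d\mu=0$ for all test functions, and appeals to ``the assumption on the measure $\mu$'' to conclude. Your approach instead exhibits the approximating sequence directly---choosing $\phi_n=A^{-1/2}(\nabla\eta_n+i\kappa\eta_n)$ so that $e_\kappa A^{1/2}\phi_n=\nabla(e_\kappa\eta_n)$ is an exact classical gradient with identically vanishing curl---and thereby makes the membership in $H^1_{\curl A^{1/2},\kappa}$ transparent from the definition as a closure. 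This buys you a self-contained argument that does not rely on any implicit equivalence between the weak irrotational condition and the Sobolev-space membership; the paper's route is shorter but leaves that equivalence to the reader.
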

\begin{proof}
	The statement follows immediately from Definition \ref{HcurlAdef}.
\end{proof}
\begin{thm}
\label{thm_curlpoincare}

Suppose that there exists $C_{\rm P}>0$  such that\,\footnote{In \cite[Appendix B]{CDO22} we describe a class of measure that satisfy this assumption.}
\begin{equation}
\begin{aligned}
\biggl\|u-\int_Qu\biggr\|_{L^2(Q,d\mu)}&\leq C_{\rm P}\bigl\|\curl(e_\varkappa u)\bigr\|_{L^2(Q,d\mu)}\\[0.3em]
&\forall \varkappa\in Q',\quad\bigl(e_\varkappa u, \curl(e_\varkappa u)\bigr)\in H^1_{\curl,\varkappa}(Q, d\mu)\ \ \ {\rm such\ that}
\ \ \ \overline{e}_\varkappa{\rm div}(e_\varkappa u)=0,
\end{aligned}
\label{curlpoincare}
\end{equation}
and assume in addition that 
\begin{equation}
\max_Q{\bigl\Vert A^{-1}\nabla A\bigr\Vert}<1.
\label{ass_a}
\end{equation}
For each $w\in L^2(Q, d\mu),$ $\varkappa\in Q',$ define $\widetilde{w}_{\varkappa}$ as in (\ref{decokneq0}). 
There exists $C>0$ such that 
\begin{equation}
\begin{aligned}
\bigl\Vert\widetilde{w}_{\varkappa}\bigr\Vert_{L^2(Q, d\mu)}&\leq C\bigl\|\curl (e_\varkappa A^{1/2} w)\bigr\|_{L^2(Q, d\mu)}
\quad \forall \varkappa \in Q', \ \bigl(e_\varkappa w, \curl (e_\varkappa A^{1/2} w)\bigr)\in H^1_{\curl A^{1/2},\varkappa}(Q, d\mu).
\end{aligned} 
\label{poincare_ineq_A}
\end{equation}
\end{thm}
\begin{proof}
The proof is based on applying (\ref{curlpoincare}) to a pair 
$\bigl(e_\varkappa A^{-1/2}\widetilde{w}_{\varkappa}, \curl (e_\varkappa A^{-1/2}\widetilde{w}_{\varkappa})\bigr)$
generated by the function $w$ in the inequality \eqref{poincare_ineq_A}.

In order to define $\curl (e_\varkappa A^{-1/2}\widetilde{w}_{\varkappa})\bigr)$
as an element of $L^2(Q,d\mu),$
take a sequence $\{\zeta_n\}\subset[C^1_\#]^3$ such that 
$
\zeta_n\rightarrow w
$
and 
$\curl (e_\varkappa A^{1/2}\zeta_n)\rightarrow \curl(e_\varkappa A^{1/2}w)$ in $L^2(Q,d\mu).$
(Such a sequence exists by the definition of $H^1_{\curl A^{1/2},\varkappa}$ --- see Definition \ref{HcurlAdef}.)
Next, consider the sequence 
\[
\phi_n= \zeta_n - A^{-1/2}\bigl(\overline{e}_\varkappa \nabla (e_\varkappa\Psi_\varkappa)+I\bigr)\gamma_n - A^{-1/2} \overline{e}_\varkappa \nabla\bigl(e_\varkappa\Phi_{\zeta_n}^{(\varkappa)}\bigr),\qquad n\in{\mathbb N},
\]
where $\gamma_n$ is given by (\ref{c_form}) with $w$ replaced by $\zeta_n.$
By the decomposition \eqref{decokneq0}, one has the convergence $\phi_n \to \widetilde{w}_{\varkappa}$ in $L^2(Q,d\mu).$ 
For each $n\in{\mathbb N},$ we write
\begin{equation}
	\begin{aligned}
\curl\bigl(e_\varkappa A^{-1/2}\phi_n\bigr)&=\curl\bigl(e_\varkappa A^{-1}A^{1/2}\phi_n\bigr)=A^{-1}\curl\bigl(e_\varkappa A^{1/2}\phi_n\bigr)+(\nabla A^{-1}) A\times A^{-1/2} e_\varkappa \phi_n\\[0.3em]
&=A^{-1}\curl\bigl(e_\varkappa A^{1/2}\zeta_n\bigr)-{\rm i}A^{-1}e_\varkappa\varkappa\times\gamma_n-A^{-1}\nabla A\times A^{-1/2} e_\varkappa \phi_n.
\end{aligned}
\label{lim_n_curl}
\end{equation}
Passing to the limit in (\ref{lim_n_curl}) as $n\to\infty$, we define
\begin{equation}
	\label{definition_curlwtilde}
\curl\bigl(e_\varkappa A^{-1/2}\widetilde{w}_{\varkappa}\bigr):=A^{-1}\curl \bigl(e_\varkappa A^{1/2}w\bigr)-{\rm i}A^{-1}e_\varkappa\varkappa\times\gamma +A^{-1}\nabla A\times e_\varkappa A^{-1/2} \widetilde{w}_{\varkappa},
\end{equation}
where $\gamma$ is given by \eqref{c_form}.

By virtue of (\ref{curlpoincare}), 
we have 
\begin{align}
\bigl\| A^{-1/2}\widetilde{w}_{\varkappa}
\bigr\|_{L^2(Q,d\mu)}
\leq C_{\rm P}\bigl\|\curl\bigl(e_\varkappa A^{-1/2}\widetilde{w}_{\varkappa}\bigr)\bigr\|_{L^2(Q,d\mu)}.
\label{pre_est}
\end{align}
\begin{lem}
	\label{second_lemma}
For all $\varkappa\in Q',$ one has 
\[
\Vert A^{-1/2}e_\varkappa\varkappa\times\gamma\bigr\Vert_{L^2(Q,d\mu)}\le\bigl\Vert\curl(e_\varkappa A^{-1/2}\gamma)\Vert_{L^2(Q,d\mu)}\le\bigl\Vert\curl(e_\varkappa A^{-1/2}w)\Vert_{L^2(Q,d\mu)}.
\]
\end{lem}
\begin{proof}
	Note that 
	\[
	\curl(e_\varkappa A^{-1/2}\gamma)=A^{-1/2}e_\varkappa\gamma\times\biggl(\frac{1}{2}A^{-1}\nabla A-{\rm i}\varkappa\biggr).
	\]
	Hence, 
	\[
	\bigl\Vert\curl(e_\varkappa A^{-1/2}\gamma)\bigr\Vert^2_{L^2(Q,d\mu)}=\biggl\Vert A^{-1/2}e_\varkappa\gamma\times\frac{1}{2}A^{-1}\nabla A\biggr\Vert^2_{L^2(Q,d\mu)}+
	\bigl\Vert A^{-1/2}e_\varkappa\gamma\times(-{\rm i}\varkappa)\bigr\Vert^2_{L^2(Q, d\mu)},
	\]
	from which the claim follows. \end{proof}
Substituting the expression (\ref{definition_curlwtilde}) into the right-hand side of (\ref{pre_est}), taking into account Lemma \ref{second_lemma},
and using the assumption (\ref{ass_a}) yields
\begin{align*}
	\bigl\| A^{-1/2}\widetilde{w}_{\varkappa}
	\bigr\|_{L^2(Q,d\mu)}
	\leq\widetilde{C}\bigl\|\curl\bigl(e_\varkappa A^{1/2}w\bigr)\bigr\|_{L^2(Q,d\mu)},\ \ \qquad \widetilde{C}:=\frac{C_{\rm P}\max_Q\Vert A^{-1}\Vert}{1-\max_Q{\bigl\Vert(\nabla A^{-1})A\bigr\Vert}},
\end{align*}
and hence \eqref{poincare_ineq_A} holds with $C=\widetilde{C}\Vert A^{-1/2}\Vert^{-1}.$
\end{proof}

\section{Electric displacement $D^\varepsilon$}
\label{displacement_sec}

In order to write the asymptotic approximation for the electric displacement ${\mathfrak D}^\varepsilon_\theta,$ we consider the following generalised cell problem for
 a ${\rm div}A^{-1/2}$-free (see (\ref{def_solenoidal}))
matrix function $\widetilde{N}$ with columns in $H^1_{\curl A^{1/2}}(Q,d\mu):$ 
\begin{equation}
A^{1/2} \curl \widetilde{A} \curl\bigl(A^{1/2} \widetilde{N}\bigr)=-A^{1/2} \curl \widetilde{A},
\qquad
\int_Q A^{-1/2} \widetilde{N}=0,
\label{cell_pbN_general}
\end{equation}
understood in the sense of the integral identity
\begin{equation}
	\label{cell_pbN_general_weak}
\int_Q \widetilde{A} \curl (A^{1/2}\widetilde{N})\cdot\overline{\curl (A^{1/2}\varphi)}d\mu=-\int_Q \widetilde{A}\,\overline{\curl (A^{1/2}\varphi)} \qquad \forall \varphi \in H^1_{\curl A^{1/2}}(Q, d\mu),\ \  \int_Q A^{-1/2}\varphi=0.
\end{equation}

%
%
%

The Poincar\'{e} inequality \eqref{poincare_ineq_A} immediately implies the following statement.
\begin{prop}
There exists a unique ${\rm div}A^{-1/2}$-free solution $\widetilde{N}$
to  
\eqref{cell_pbN_general}.
\end{prop}

 For each $\varepsilon>0,$ $\theta\in\varepsilon^{-1}Q',$ $G\in L^2(Q,d\mu),$ define $d^\varepsilon_\theta(G)\in \C^3$ by the formula
	\begin{equation}\label{dtheta_eq_gen}
		d^\varepsilon_\theta(G)= 
		-\big(\widetilde{\mathfrak A}^{\rm hom}_\theta
		+A^{\rm hom}_{\varepsilon\theta} \big)^{-1} 
		\int_Q G.
	\end{equation}	
Here, for each $\theta \in \R^3$ the matrix 
$\widetilde{\mathfrak A}^{\rm hom}_\theta$ has entries 
\begin{equation}
	(\widetilde{\mathfrak A}^{\rm hom}_\theta)_{ij}=\varepsilon_{isk}({\rm i}\theta)_s\widetilde{A}^{\rm hom}_{kr}\varepsilon_{rtj}({\rm i}\theta)_t,
	\qquad i,j=1,2,3,
	\label{Atheta_tilde}
\end{equation}
where
\begin{equation}
\widetilde{A}^{\rm hom}:= \int_Q \widetilde{A}\bigl(\curl(A^{1/2}\widetilde{N})+I\bigr).
\label{Ahom_tilde_def}
\end{equation} 
\begin{thm}
\label{main_thm_gen}
There exists $C>0$ independent of $\varepsilon$, $\theta$ and $G$, such that for the solution ${\mathfrak D}^\varepsilon_\theta$ of \eqref{Max_ft} one has 
\begin{equation}\label{main_estimate_gen}
\bigl\| {\mathfrak D}^\varepsilon_\theta -A^{-1/2}(\overline{e}_{\varepsilon\theta}\nabla (e_{\varepsilon\theta} \Psi_{\varepsilon\theta})+I){d}^\varepsilon_\theta(G)\bigr\|_{L^2(Q, d\mu)}\leq C \varepsilon \| G\|_{L^2(Q, d\mu)}.
\end{equation}
\end{thm}
The proof of Theorem \ref{main_thm_gen} is provided in Section \ref{est_sec} below.

To obtain a norm-resolvent estimate in the whole-space setting for the original problem \eqref{Max_general_g}, it remains to apply the inverse Floquet transform to the estimate \eqref{main_estimate_gen}. Note that for  $J\in L^2(\R^3,d\mu^\varepsilon),$ 
one has
\begin{equation}
\int_Q J^\varepsilon_\theta d\mu= \widehat{J}(\theta), \quad \varepsilon>0,\quad\theta \in \varepsilon^{-1} Q',
\label{Jmean}
\end{equation} 
where $J_\theta^\varepsilon$ is defined analogously to (\ref{Floquet_def1}), and
\begin{equation*}
	\widehat{J}(\theta):= (2\pi)^{-3/2}\int_{\R^3}J\overline{e_\theta} \; d\mu^\varepsilon, \quad \theta \in \R^3,
\end{equation*}
is the Fourier transform of $J.$ Note that (\ref{div_ekG}) implies that $J^\varepsilon_\theta\cdot\theta=0$ for all $\varepsilon>0,$ $\theta\in\varepsilon^{-1}Q',$ and hence 
$\widehat{J}(\theta)\cdot\theta=0$ for all $\theta\in{\mathbb R}^3.$

\begin{cor}
There exists a constant $C>0$ such that for all $\varepsilon>0,$ $J\in L^2(\R^3,d\mu^\varepsilon),$ one has
\begin{equation*}
\begin{aligned}
\biggl\|{D}^\varepsilon +(2\pi)^{-3/2}A^{-1}\biggl(\frac{\cdot}{\varepsilon}\biggr)\int_{\R^3} 
\biggl\{\overline{e}_{\varepsilon\theta}\nabla({e}_{\varepsilon\theta}\Psi_{\varepsilon\theta})\biggl(\frac{\cdot}{\varepsilon}\biggr)+I\biggr\}
&\big(\mathfrak{\widetilde{A}}_\theta^{\rm hom}+A^{\rm hom}_{\varepsilon\theta}\big)^{-1} \widehat{J}(\theta) e_\theta  d\theta \biggr\|_{L^2(\R^3, d\mu^\varepsilon)}\\[0.5em]
&\leq C \varepsilon \|J\|_{L^2(\R^3, d\mu^\varepsilon)}.
\end{aligned}
\end{equation*}
\end{cor}

\begin{proof}
	Note that the solution  ${\mathfrak D}^\varepsilon_\theta$ to \eqref{Max_ft} with $G=J^\varepsilon_\theta$ on its right-hand side is the Floquet transform (see Section \ref{sec2}) of ${\mathfrak D}^\varepsilon=\sqrt{A(\cdot/\varepsilon)}D^\varepsilon.$ Therefore, one has 
	\begin{equation}
		\begin{aligned}
			\mathfrak{D}^\varepsilon&+(2\pi)^{-3/2}A^{-1/2}\biggl(\frac{\cdot}{\varepsilon}\biggr)\int_{\R^3} 
			\biggl\{\overline{e}_{\varepsilon\theta}\nabla({e}_{\varepsilon\theta}\Psi_{\varepsilon\theta})\biggl(\frac{\cdot}{\varepsilon}\biggr)+I\biggr\}
			{d}_\theta^\varepsilon(J^\varepsilon_\theta)d\theta\\[0.5em]
			&=\Big[ \mathcal{F}_\varepsilon^{-1}\bigl\{e_{\varepsilon\theta}{\mathfrak D}^\varepsilon_\theta\bigr\}- 
			\mathcal{F}_\varepsilon^{-1}\bigl\{e_{\varepsilon\theta} A^{-1/2}\bigl(\overline{e}_{\varepsilon\theta}\nabla({e}_{\varepsilon\theta}\Psi_{\varepsilon\theta})+I\bigr) {d}_\theta^\varepsilon(J^\varepsilon_\theta)\bigr\}\Big]\\[0.4em]
			&\hspace{2.5em}+\bigg[ \mathcal{F}_\varepsilon^{-1}\bigl\{e_{\varepsilon\theta} A^{-1/2}\bigl(\overline{e}_{\varepsilon\theta}\nabla({e}_{\varepsilon\theta}\Psi_{\varepsilon\theta})+I\bigr) {d}_\theta^\varepsilon(J^\varepsilon_\theta)\bigr\} \\[0.2em]
			&\hspace{5em}+(2\pi)^{-3/2}A^{-1/2}\biggl(\frac{\cdot}{\varepsilon}\biggr)\int_{\R^3} 
			\biggl\{\overline{e}_{\varepsilon\theta}\nabla({e}_{\varepsilon\theta}\Psi_{\varepsilon\theta})\biggl(\frac{\cdot}{\varepsilon}\biggr)+I\biggr\}
			\big(\widetilde{\mathfrak{A}}^{\rm hom}_\theta+A^{\rm hom}_{\varepsilon\theta}\big)^{-1} \widehat{J}(\theta) e_\theta(\cdot)d\theta\biggr].
		\end{aligned}
		\label{split_L2}
	\end{equation} 
	We analyse the $L^2$ norm of the expression (\ref{split_L2}). In view of the result of Theorem \ref{main_thm_gen} and the fact that $\mathcal{F}_\varepsilon$ is unitary, 
	we estimate the term in the first square brackets on the right-hand side of (\ref{split_L2}) as follows:
	\begin{align}\label{estimate_firstbit}
		\Bigl\|\mathcal{F}_\varepsilon^{-1}\Bigl\{e_{\varepsilon\theta}\bigl({\mathfrak D}^\varepsilon_\theta - 
		A^{-1/2}\bigl(\overline{e}_{\varepsilon\theta}\nabla({e}_{\varepsilon\theta}\Psi_{\varepsilon\theta})+I\bigr) {d}_\theta^\varepsilon(J^\varepsilon_\theta)\bigr)\Bigr\}\Bigr\|_{L^2(\R^3,d\mu^\varepsilon)}\leq C\varepsilon\bigl\|J\|_{L^2(\R^3,d\mu^\varepsilon)}.
	\end{align}
	Noting that (see (\ref{inverseFloquet}), (\ref{Jmean}))
	\begin{align*}
		&
		\mathcal{F}_\varepsilon^{-1}\bigl\{e_{\varepsilon\theta} A^{-1/2}\bigl(\overline{e}_{\varepsilon\theta}\nabla({e}_{\varepsilon\theta}\Psi_{\varepsilon\theta})+I\bigr) {d}_\theta^\varepsilon(J^\varepsilon_\theta)\bigr\}
		\\[0.4em]
		&
		\hspace{6.5em}=-(2\pi)^{-3/2}A^{-1/2}\biggl(\frac{\cdot}{\varepsilon}\biggr)\int_{\varepsilon^{-1}Q'} 
		\biggl\{\overline{e}_{\varepsilon\theta}\nabla({e}_{\varepsilon\theta}\Psi_{\varepsilon\theta})\biggl(\frac{\cdot}{\varepsilon}\biggr)+I\biggr\}
		\bigl(\widetilde{\mathfrak{A}}^{\rm hom}_{\theta}+A^{\rm hom}_{\varepsilon\theta}\bigr)^{-1} \widehat{J}(\theta) e_\theta(\cdot)d\theta,
	\end{align*}
	it remains to obtain a suitable smallness estimate for  
	\begin{align}\label{integral_theta}
		(2\pi)^{-3/2}A^{-1/2}\biggl(\frac{\cdot}{\varepsilon}\biggr)\int_{\R^3/\varepsilon^{-1}Q'} \biggl\{\overline{e}_{\varepsilon\theta}\nabla({e}_{\varepsilon\theta}\Psi_{\varepsilon\theta})\biggl(\frac{\cdot}{\varepsilon}\biggr)+I\biggr\}
		\bigl(\widetilde{\mathfrak{A}}^{\rm hom}_{\theta}+A^{\rm hom}_{\varepsilon\theta}\bigr)^{-1}\widehat{J}(\theta) e_\theta(\cdot)d\theta.
	\end{align}
	To this end, note that using the estimates \eqref{bound_A_hom} we have
	\begin{align}\label{sup_theta}
		\sup_{\theta \in \R^3/\varepsilon^{-1}Q'} \left| 
		\bigl(\widetilde{\mathfrak{A}}^{\rm hom}_{\theta}+A^{\rm hom}_{\varepsilon\theta}\bigr)^{-1} \right|
		\leq\bigl(\varepsilon^{-2}\pi^2\bigl\Vert(\widetilde{A}^{\rm hom})^{-1}\bigr\Vert+\nu_1\bigr)^{-1}\le\pi^{-2}\Vert(\widetilde{A}^{\rm hom})^{-1}\bigr\Vert^{-1}\varepsilon^2.
	\end{align}
	Using Parseval's identity and \eqref{sup_theta}, we can estimate the $L^2$ norm of \eqref{integral_theta} as follows:
	\begin{equation}
		\label{estimate_secondbit}
		\begin{aligned}
			&\bigg\| (2\pi)^{-3/2}A^{-1/2}\biggl(\frac{\cdot}{\varepsilon}\biggr)\int_{\R^3/\varepsilon^{-1}Q'}
			\biggl\{\overline{e}_{\varepsilon\theta}\nabla({e}_{\varepsilon\theta}\Psi_{\varepsilon\theta})\biggl(\frac{\cdot}{\varepsilon}\biggr)+I\biggr\}
			\bigl(\widetilde{\mathfrak{A}}^{\rm hom}_{\theta}+A^{\rm hom}_{\varepsilon\theta}\bigr)^{-1} 
			\widehat{J}(\theta) e_\theta(\cdot)d\theta \bigg\|_{L^2(\R^3,d\mu^\varepsilon)}\\[0.6em]
			&\leq (2\pi)^{-3/2}\max_{Q}\bigl\Vert A^{-1/2}\bigr\Vert
			\left(\big\| \nabla({e}_{\varepsilon\theta}\Psi_{\varepsilon\theta})\big\|_{L^2(Q,d\mu)}+I\right)\pi^{-2}\varepsilon^2\big\|\widehat{J}\big\|_{L^2(\R^3)}
			\leq 
			C\varepsilon^2\big\|J\big\|_{L^2(\R^3,d\mu^\varepsilon)}.
		\end{aligned}
	\end{equation}
	In the last inequality we use the fact that \eqref{eq_Psikappa}
	implies a uniform bound on $\|  \nabla({e}_{\varkappa}\Psi_{\varkappa})\|_{L^2(Q,d\mu)},$ $\varkappa\in{\mathbb R}^3.$ Combining \eqref{estimate_firstbit} and \eqref{estimate_secondbit} yields the claim. 
\end{proof}

\subsection{Asymptotic approximation of $\mathfrak{D}^\varepsilon_\theta$} 
\label{est_sec}
We now proceed to the proof of Theorem \ref{main_thm_gen}. For the rest of Section \ref{displacement_sec}, we use $d_\theta^\varepsilon$ as shorthand for $d_\theta^\varepsilon(G).$

For each $\theta \in \varepsilon^{-1} Q'$, $\varepsilon>0$, we write 
\begin{equation}
	\label{D=U+z_general}
{\mathfrak D}^\varepsilon_\theta:=A^{-1/2}\bigl(\overline{e}_{\varepsilon\theta} \nabla (e_{\varepsilon\theta} \Psi_{\varepsilon\theta})+I\bigr) d_\theta^\varepsilon+\varepsilon N({\rm i}\theta\times d_\theta^\varepsilon)+\varepsilon^2 R^\varepsilon_\theta
+z^\varepsilon_\theta.
\end{equation}
Here 
$N:=\widetilde{N}+A^{-1/2}a_\theta,$
where $a_\theta \in \R^{3\times 3}$ is such that $a_\theta\cdot\theta=0,$ $a_\theta\zeta\cdot\theta=0$ for all $\zeta\in{\mathbb R}^3,$ $\zeta\cdot\theta=0,$ 
and 
\begin{equation}
\int_Q{\rm i}\theta\times \widetilde{A}\bigl({\rm i}\theta \times A^{1/2} N({\rm i}\theta\times c)\bigr)=0\qquad \forall c\in{\mathbb R}^3.
\label{a_id}
\end{equation}
The existence of a unique $a_\theta$ 
is established in the same way as in \cite[Proposition 5.5, Lemma 5.6]{CDO22}.

For every $\phi\in H^1_{\curl A^{1/2},\varepsilon\theta}(Q, d\mu),$ consider the constant
\begin{equation}
\alpha_\phi:=\biggl(\int_QA^{-1}\biggr)^{-1}\int_QA^{-1/2}\phi.
\label{alphaphi}
\end{equation}
Notice that $\phi-A^{-1/2}\alpha_\phi\in H^1_{\curl A^{1/2},\varepsilon\theta}(Q, d\mu), $ 
and
\begin{equation}
\int_QA^{-1/2}\bigl(\phi-A^{-1/2}\alpha_\phi\bigr)=0.
\label{3stars}
\end{equation}
The term $R^\varepsilon_\theta \in H^1_{\curl A^{1/2},\varepsilon, \theta}(Q, d\mu)$ in \eqref{D=U+z_general} is defined as the solution to
\begin{equation}
\begin{aligned}
\overline{e}_{\varepsilon\theta}A^{1/2}&\curl\widetilde{A} \curl(e_{\varepsilon\theta} A^{1/2} R^\varepsilon_\theta)+\varepsilon^2\bigl(R_\theta^\varepsilon-(\widetilde{R_\theta^\varepsilon})_{\varepsilon\theta}\bigr)
\\[0.4em]
&=-A^{1/2} G-\varepsilon^{-1} A^{1/2}\overline{e}_{\varepsilon\theta}\curl\bigl(e_{\varepsilon\theta}\widetilde{A}({\rm i}\theta\times d^\varepsilon_\theta)\bigr)-\varepsilon^{-1} A^{1/2}\overline{e}_{\varepsilon\theta}\curl \widetilde{A} \curl\bigl(e_{\varepsilon\theta}A^{1/2} N({\rm i}\theta\times d_\theta^\varepsilon)\bigr)\\[0.4em]
&\ \ \ -A^{-1/2}\overline{e}_{\varepsilon\theta}\bigl(\nabla (e_{\varepsilon\theta}\Psi_{\varepsilon\theta}) +I\bigr) d^\varepsilon_\theta =:\mathfrak{H}^\varepsilon_\theta \in\bigl(H^1_{\curl A^{1/2},\varepsilon\theta}(Q, d\mu)\bigr)^*.
\end{aligned}
\label{R_eq_general}
\end{equation}
This is understood in the sense of the integral identity
\begin{equation}
\label{R_eq_weak_general}
\begin{aligned}
&\int_Q \widetilde{A}\curl(e_{\varepsilon\theta} A^{1/2} R^\varepsilon_\theta)\cdot\overline{\curl(e_{\varepsilon\theta} A^{1/2} \varphi)}+\varepsilon^2 \int_Q\bigl(R_\theta^\varepsilon-(\widetilde{R_\theta^\varepsilon})_{\varepsilon\theta}\bigr)\cdot\overline{\varphi} 
\\[0.4em]
&
=-\int_Q\Bigl( A^{1/2} G+A^{-1/2}\bigl(\overline{e}_{\varepsilon\theta}\nabla(e_{\varepsilon\theta}\Psi_{\varepsilon\theta})+I\bigr)d_\theta^\varepsilon \Bigr) 
\cdot\overline{\varphi}\\[0.4em]
&\hspace{5em}-\varepsilon^{-1}\int_Qe_{\varepsilon\theta}  \widetilde{A}\bigl\{({\rm i}\theta\times d^\varepsilon_\theta)+\overline{e}_{\varepsilon\theta}\curl\bigl(e_{\varepsilon\theta}A^{1/2} N({\rm i}\theta\times d_\theta^\varepsilon)\bigr)\bigr\}\cdot\overline{\curl\bigl(e_{\varepsilon\theta} A^{1/2}\varphi\bigr)}
\\[0.4em]
&
=-\int_Q\Bigl( A^{1/2} G+A^{-1/2}\bigl(\overline{e}_{\varepsilon\theta}\nabla(e_{\varepsilon\theta}\Psi_{\varepsilon\theta})+I\bigr)d_\theta^\varepsilon \Bigr) 
\cdot\overline{\varphi}\\[0.4em]
&\hspace{5em}-\varepsilon^{-1}\int_Qe_{\varepsilon\theta}  \widetilde{A}\bigl\{({\rm i}\theta\times d^\varepsilon_\theta)+\curl\bigl(A^{1/2} N({\rm i}\theta\times d_\theta^\varepsilon)\bigr)\bigr\}\cdot\overline{\curl\bigl(e_{\varepsilon\theta} A^{1/2}(\varphi-A^{-1/2}\alpha_\varphi)\bigr)}
\\[0.4em]
&\hspace{5em}-\varepsilon^{-1}\int_Q \widetilde{A}\bigl\{({\rm i}\theta\times d^\varepsilon_\theta)+\curl\bigl(A^{1/2} N({\rm i}\theta\times d_\theta^\varepsilon)\bigr)\bigr\}\cdot\overline{({\rm i}\varepsilon\theta\times\alpha_\varphi)}
\\[0.4em]
&
\hspace{5em}
-\int_Qe_{\varepsilon\theta}\widetilde{A}\bigl({\rm i}\theta\times A^{1/2} N({\rm i}\theta\times d_\theta^\varepsilon)\bigr)
\cdot\overline{\curl (e_{\varepsilon\theta} A^{1/2} \varphi)}
\\[0.4em]
&
=-\int_Q\Bigl( A^{1/2} G+A^{-1/2}\bigl(\overline{e}_{\varepsilon\theta}\nabla(e_{\varepsilon\theta}\Psi_{\varepsilon\theta})+I\bigr)d_\theta^\varepsilon \Bigr) 
\cdot\overline{\varphi}
-\int_Q \widetilde{A}^{\rm hom}({\rm i}\theta\times d_\theta^\varepsilon)\cdot\overline{({\rm i}\theta\times\alpha_\varphi)}
\\[0.4em]
&
\hspace{5em}-\int_Qe_{\varepsilon\theta}\widetilde{A}\bigl({\rm i}\theta\times A^{1/2} N({\rm i}\theta\times d_\theta^\varepsilon)\bigr)
\cdot\overline{\curl (e_{\varepsilon\theta} A^{1/2} \varphi)}
\equiv\langle \mathfrak{H}^\varepsilon_\theta,\varphi\rangle\quad \forall\varphi\in H^1_{\curl A^{1/2},\varepsilon\theta}(Q, d\mu),
\end{aligned}
\end{equation}
where for the second equality we used the fact that 
\begin{equation*}
\overline{e}_{\varepsilon\theta}\curl\bigl(e_{\varepsilon\theta}A^{1/2} N({\rm i}\theta\times d_\theta^\varepsilon)\bigr)={\rm i}\theta\times A^{1/2} N({\rm i}\theta\times d_\theta^\varepsilon)+\curl\bigl(A^{1/2} N({\rm i}\theta\times d_\theta^\varepsilon)\bigr)
\end{equation*}
and for the third equality we invoke the definition of $\widetilde{A}^{\rm hom},$ see \eqref{Ahom_tilde_def}, as well as the fact that the function $N$ satisfies the integral identity (cf. \eqref{cell_pbN_general_weak})
\[
\int_Q \widetilde{A} \curl (A^{1/2}N)\cdot\overline{\curl (A^{1/2}\psi)}d\mu=-\int_Q \widetilde{A}\,\overline{\curl (A^{1/2}\psi)} \qquad \forall \psi\in H^1_{\curl A^{1/2}}(Q, d\mu),\ \  \int_Q A^{-1/2}\psi=0,
\]
setting (cf. (\ref{3stars})) $\psi=\varphi-A^{-1/2}\alpha_\varphi.$

\begin{prop}\label{prop_existenceR1}
	For each $\varepsilon>0,$ $\theta\in{\mathbb R}^3,$ there exists a unique solution $R^\varepsilon_\theta \in H^1_{\curl A^{1/2},\varepsilon\theta}(Q, d\mu)$ to \eqref{R_eq_general}.
\end{prop}
\begin{proof}
	
	
	Writing the decomposition \eqref{decokneq0} for the test function $\varphi$ in \eqref{R_eq_weak_general} and using the fact that $R_\theta^\varepsilon-(\widetilde{R_\theta^\varepsilon})_{\varepsilon\theta}$ and $\widetilde{\psi}_{\varepsilon\theta}$ are orthogonal in $L^2(Q,d\mu),$ one has
	\[
	\int_Q\bigl(R_\theta^\varepsilon-(\widetilde{R_\theta^\varepsilon})_{\varepsilon\theta}\bigr)\cdot\overline{\psi}=\int_Q\bigl(R_\theta^\varepsilon-(\widetilde{R_\theta^\varepsilon})_{\varepsilon\theta}\bigr)\cdot\overline{\bigl(\psi-\widetilde{\psi}_{\varepsilon\theta}\bigr)}.
	\]
	The claim now follows from the Lax-Milgram lemma applied to the sesquilinear form
	\begin{align*}
		\int_Q \curl(e_{\varepsilon\theta} A^{1/2} u) \cdot\overline{\curl(e_{\varepsilon\theta} A^{1/2} v)}
		+\varepsilon^2 \int_Q(u-\widetilde{u}_{\varepsilon\theta})\cdot\overline{(v-\widetilde{v}_{\varepsilon\theta})}, 
		\qquad u,v \in H^1_{\curl A^{1/2},\varepsilon\theta}(Q,d\mu),
	\end{align*}
	which is clearly bounded and is also coercive due to the
	the Poincar\'{e}-type inequality \eqref{poincare_ineq_A}.
\end{proof}

We next establish a property of $\mathfrak{H}^\varepsilon_\theta$ that we use in Section \ref{R_est_sec} for estimating the remainder $\varepsilon^2R^\varepsilon_\theta$ in \eqref{D=U+z_general}. 

\begin{prop}
	\label{tildethm1} 
	For every $w\in H^1_{\curl A^{1/2},\varepsilon\theta}(Q, d\mu)$ one has
		$\langle \mathfrak{H}^\varepsilon_\theta, w\rangle=\langle \mathfrak{H}^\varepsilon_\theta, \widetilde{w}_{\varepsilon\theta}\rangle,$
	where $\widetilde{w}_{\varepsilon\theta}$ is the first term in the decomposition \eqref{decokneq0}.
\end{prop}
\begin{proof}
	Referring to the definition of $\mathfrak{H}^\varepsilon_\theta$ in \eqref{R_eq_general}, we have, for all $\varphi\in H^1_{\#,0},$ 
	\begin{equation*}
		\begin{aligned}
			\bigl\langle \mathfrak{H}^\varepsilon_\theta, A^{-1/2}\overline{e}_{\varepsilon\theta}\nabla(e_{\varepsilon\theta} \varphi)\bigr\rangle=&
			\int_Q e_{\varepsilon\theta} G\cdot\overline{\nabla(e_{\varepsilon\theta} \varphi)}+\int_Q A^{-1}\bigl(\overline{e}_{\varepsilon\theta}\nabla(e_{\varepsilon\theta} \Psi_{\varepsilon\theta})+I\bigr) d_\theta^\varepsilon\cdot{{e}_{\varepsilon\theta}\overline{\nabla(e_{\varepsilon\theta}\varphi)}}=0,
		\end{aligned}
	\end{equation*}
	where both integral integrals vanish, since  $\overline{e}_{\varepsilon\theta}\divv (e_{\varepsilon\theta} G)=0$ 
	and
	$\Psi_{\varepsilon\theta}$ solves \eqref{eq_Psikappa} with $\varkappa=\varepsilon\theta.$ 

	Furthermore, for all $c\in{\mathbb R}^3$  one has (see \eqref{alphaphi}) $\alpha_{A^{-1/2}c}=c,$
	and hence
	\begin{align*}
		\bigl\langle \mathfrak{H}^\varepsilon_\theta,  A^{-1/2}c\bigr\rangle&=
		-\int_Q G\cdot c-\int_Q A^{-1}\bigl(\overline{e}_{\varepsilon\theta}\nabla(e_{\varepsilon\theta} \Psi_{\varepsilon\theta})+I\bigr)d_\theta^\varepsilon \cdot c
		-\int_Q{\rm i}\theta\times\widetilde{A}^{\rm hom}({\rm i}\theta\times  d_\theta^\varepsilon)\cdot c
		\\[0.4em]
		&
		=-\int_Q
		\bigl(G+
		\bigl(\widetilde{\mathfrak A}^{\rm hom}_\theta+{A}^{\rm hom}_{\varepsilon\theta}\bigr)
		d_\theta^\varepsilon\bigr)\cdot c=0,
	\end{align*}
	where the last equality follows from \eqref{dtheta_eq_gen}.
\end{proof}

	\subsection{Estimate for $R^\varepsilon_\theta$}

\label{R_est_sec}

\begin{thm}
\label{thm_Restimate_gen}
There exists $C>0$ such that for all $\varepsilon>0$ and $\theta\in \varepsilon^{-1}Q'$, the solution $R^\varepsilon_\theta$ of \eqref{R_eq_general} satisfies
\begin{align}
&\bigl\|(\widetilde{R^\varepsilon_\theta})_{\varepsilon\theta}
\bigr\|_{L^2(Q, d\mu)}\leq C \|G\|_{L^2(Q, d\mu)},\qquad\qquad
\bigl\|R^\varepsilon_\theta-(\widetilde{R^\varepsilon_\theta})_{\varepsilon\theta} 
\bigr\|_{L^2(Q, d\mu)}\leq C\varepsilon^{-1} \|G\|_{L^2(Q, d\mu)}.
\label{R2_A_gen}
\end{align}
\end{thm}

\begin{proof}
	Setting $\varphi=R^\varepsilon_\theta$ in (\ref{R_eq_weak_general}), we obtain
	\begin{equation*}
		\begin{aligned}
			\int_Q\widetilde{A}\curl\bigl(e_{\varepsilon\theta} A^{1/2} R^\varepsilon_\theta\bigr)\cdot\overline{\curl(e_{\varepsilon\theta} A^{1/2}R^\varepsilon_\theta)}+\varepsilon^2 \int_Q\bigl(R^\varepsilon_\theta-(\widetilde{R^\varepsilon_\theta})_{\varepsilon\theta}\bigr)\cdot\overline{R^\varepsilon_\theta}
			=\bigl\langle\mathfrak{H}^\varepsilon_\theta, R^\varepsilon_\theta\bigr\rangle.
		\end{aligned}
	\end{equation*}
	Taking into account the $L^2$-orthogonality of $R^\varepsilon_\theta-(\widetilde{R^\varepsilon_\theta})_{\varepsilon\theta}$ and $(\widetilde{R^\varepsilon_\theta})_{\varepsilon\theta}$ as well as Proposition \ref{tildethm1} yields
	\begin{equation}
		\begin{aligned}
			\int_Q \widetilde{A}\curl(e_{\varepsilon\theta} A^{1/2} R^\varepsilon_\theta) \cdot\overline{\curl(e_{\varepsilon\theta} A^{1/2} R^\varepsilon_\theta)}&+\varepsilon^2 \int_Q\bigl|R^\varepsilon_\theta-(\widetilde{R^\varepsilon_\theta})_{\varepsilon\theta}\bigr|^2
			=\bigl\langle\mathfrak{H}^\varepsilon_\theta,(\widetilde{R^\varepsilon_\theta})_{\varepsilon\theta}\bigr\rangle.
		\end{aligned}
		\label{R_quadratic_noRtilde_general}
	\end{equation}
Noting that $\int_{Q}A^{-1/2}\widetilde{(R^\varepsilon_\theta)}_{\varepsilon_\theta}=0$ (see \eqref{mean_wtilde}), and hence $\alpha_\varphi=0$ for $\varphi={(\widetilde{R^\varepsilon_\theta})_{\varepsilon\theta}}$ (see \eqref{alphaphi}), we write
\begin{equation*}
	\begin{aligned}
\bigl\langle\mathfrak{H}^\varepsilon_\theta,(\widetilde{R^\varepsilon_\theta})_{\varepsilon\theta}\bigr\rangle=&-\int_Q\Bigl( A^{1/2} G+A^{-1/2}\bigl(\overline{e}_{\varepsilon\theta}\nabla(e_{\varepsilon\theta}\Psi_{\varepsilon\theta})+I\bigr)d_\theta^\varepsilon \Bigr) 
\cdot\overline{(\widetilde{R^\varepsilon_\theta})_{\varepsilon\theta}}
\\[0.4em]
&-\int_Qe_{\varepsilon\theta}\widetilde{A}\bigl({\rm i}\theta\times A^{1/2} N({\rm i}\theta\times d_\theta^\varepsilon)\bigr)
\cdot\overline{\curl\bigl(e_{\varepsilon\theta} A^{1/2}R^\varepsilon_\theta\bigr)},
\end{aligned}
\end{equation*}
where we have used (\ref{a_id}).

Now, combining (\ref{R_quadratic_noRtilde_general}) with the Poincar\'{e} inequality provided by Theorem \ref{thm_curlpoincare}, 
we obtain the first estimate in (\ref{R2_A_gen}).
Finally, using (\ref{R_quadratic_noRtilde_general}) once again immediately yields the second estimate in 
\eqref{R2_A_gen}.
\end{proof}



\begin{cor}\label{corollary_U_general}
	There is a constant $C>0$ such that for all $\varepsilon$, $\theta,$ $G$ one has
		$\bigl\|R^\varepsilon_\theta 
		\bigr\|_{L^2(Q,d\mu)}
		\leq C \varepsilon^{-1}\|G\|_{L^2(Q,d\mu)}.$
\end{cor}

\begin{rmk}
	The first estimate in (\ref{R2_A_gen}) and equation (\ref{R_quadratic_noRtilde_general}) yield 
	\begin{equation}
		\bigl\|\curl(e_{\varepsilon\theta} A^{1/2} R^\varepsilon_\theta)\bigr\|_{L^2(Q,d\mu)}\le C\|G\|_{L^2(Q,d\mu)}.
		\label{Rcurl_general}
	\end{equation}
\end{rmk}

\subsection{Conclusion of the proof of Theorem \ref{main_thm_gen}}

\label{z_est_sec}

\begin{prop}\label{prop_z_general}
There exists $C>0$ such that the remainder $z^\varepsilon_\theta$ defined in \eqref{D=U+z_general} satisfies the estimate
\begin{equation}\label{z_estimate_gen}
\|z^\varepsilon_\theta\|_{L^2(Q,d\mu)}\leq C \varepsilon \|G\|_{L^2(Q, d\mu)},\qquad \bigl\Vert\curl\bigl(e_{\varepsilon\theta} A^{1/2} z^\varepsilon_\theta\bigr)\bigr\Vert_{L^2(Q, d\mu)}\le C\varepsilon^2\Vert G\Vert_{L^2(Q, d\mu)}.
\end{equation}
\end{prop}
\begin{proof}
The function $z^\varepsilon_\theta \in H^1_{\curl A^{1/2},\varepsilon\theta}(Q, d\mu)$ solves
\begin{align}\label{z_eq_gen}
\varepsilon^{-2} A^{1/2}\overline{e}_{\varepsilon\theta}\curl \widetilde{A}\curl\bigl(e_{\varepsilon\theta} A^{1/2} z^\varepsilon_\theta\bigr) + z^\varepsilon_\theta =-\varepsilon N({\rm i}\theta\times d_\theta^\varepsilon)-\varepsilon^2 (\widetilde{R^\varepsilon_\theta})_{\varepsilon\theta},
\end{align}
understood im the sense that 
\begin{equation}
	\begin{aligned}
		\varepsilon^{-2}\int_Q\curl(e_{\varepsilon\theta} A^{1/2}z^\varepsilon_\theta)&\cdot\overline{\curl(e_{\varepsilon\theta} A^{1/2}\varphi)}+\int_Qz^\varepsilon_\theta\cdot\overline{\varphi}
		\\[0.4em]
		&=-\varepsilon^2\int_Q (\widetilde{R^\varepsilon_\theta})_{\varepsilon\theta}\cdot\overline{\varphi}-\varepsilon\int_QN({\rm i}\theta\times d_\theta^\varepsilon)\cdot\overline{\varphi}
		\qquad \forall\varphi\in H^1_{\curl A^{1/2},\varepsilon\theta}(Q, d\mu).
		\label{z_eq_var_general}
	\end{aligned}
\end{equation}
Setting $\varphi=z^\varepsilon_\theta$ in (\ref{z_eq_var_general}), we obtain
\begin{equation}
\varepsilon^{-2}\int_Q \widetilde{A}  \curl(e_{\varepsilon\theta} A^{1/2}z^\varepsilon_\theta)\cdot\overline{\curl(e_{\varepsilon\theta} A^{1/2} z^\varepsilon_\theta)}+\int_Q |z^\varepsilon_\theta|^2=-\varepsilon\int_Q({\rm i}\theta\times d_\theta^\varepsilon) \cdot\overline{z^\varepsilon_\theta}-\varepsilon^2\int_Q (\widetilde{R^\varepsilon_\theta})_{\varepsilon\theta}\cdot\overline{z^\varepsilon_\theta}.
\label{641}
\end{equation}
	Applying the H\"{o}lder inequality to both terms on the right-hand side of \eqref{641}, using Theorem \ref{thm_Restimate_gen} to estimate $\widetilde{R}^\theta_\varepsilon$ and recalling the definition \eqref{dtheta_eq_gen} of $d_\theta^\varepsilon$ yields the first estimate in \eqref{z_estimate_gen}.
Using this to estimate the right-hand side of \eqref{641} once again, in conjunction with the first term on its left-hand side, yields the second estimate in \eqref{z_estimate_gen}.
\end{proof}


Proposition \ref{prop_z_general} and Corollary \ref{corollary_U_general} imply \eqref{main_estimate_gen}, since
\begin{equation*}
\bigl\| {\mathfrak D}^\varepsilon_\theta -A^{-1/2}\bigl(\overline{e}_{\varepsilon\theta}\nabla({e}_{\varepsilon\theta}\Psi_{\varepsilon\theta})+I\bigr){d}_\theta^\varepsilon\bigr\|_{L^2(Q, d\mu)}
\leq \varepsilon^2\bigl\|R^\varepsilon_\theta 
\bigr\|_{L^2(Q, d\mu)}+ \| z^\varepsilon_\theta\|_{L^2(Q, d\mu)},
\end{equation*}
which concludes the proof of Theorem \ref{main_thm_gen}.

\section{Magnetic induction $B^\varepsilon$} 

To obtain the estimates for the magnetic field and induction, we recall that the Floquet transform of \eqref{Max_general} is
\begin{equation}
	\label{Max_ft_BD_gen}
\varepsilon^{-1}\overline{e}_{\varepsilon\theta}\curl(e_{\varepsilon\theta} A^{1/2} {\mathfrak D}^\varepsilon_\theta)+B^\varepsilon_\theta=0,\qquad\quad
\varepsilon^{-1} A^{1/2} \overline{e}_{\varepsilon\theta}\curl(e_{\varepsilon\theta} \widetilde{A} B^\varepsilon_\theta)-{\mathfrak D}^\varepsilon_\theta =A^{1/2}J^\varepsilon_\theta,
\end{equation}
where ${J}_\theta^\varepsilon:=\overline{e}_{\varepsilon\theta}\mathcal{F}_\varepsilon J$ (and, in particular, $\overline{e}_{\varepsilon\theta}\divv (e_{\varepsilon\theta} A^{-1/2}J^\varepsilon_\theta)=0$) and $B^\varepsilon_\theta := \overline{e}_{\varepsilon\theta}\mathcal{F}_\varepsilon B^\varepsilon$ is the Floquet-transformed magnetic induction.

To find an asymptotic approximation for $B^\varepsilon_\theta,$ we use the expression \eqref{D=U+z_general} for ${\mathfrak D}^\varepsilon_\theta$ and the first equation of \eqref{Max_ft_BD_gen}, which yields
\begin{align*}
B^\varepsilon_\theta &= \varepsilon^{-1}\overline{e}_{\varepsilon\theta}\curl\Bigl\{e_{\varepsilon\theta}\big( (\overline{e}_{\varepsilon\theta} \nabla(e_{\varepsilon\theta} \Psi_{\varepsilon\theta}) +I)d_\theta^\varepsilon + \varepsilon A^{1/2}N({\rm i}\theta\times d^\varepsilon_\theta)+\varepsilon^2 A^{1/2}R^\varepsilon_\theta+A^{1/2}z^\varepsilon_\theta\big)\Bigr\}\\[0.3em]
&
=\bigl(\curl(A^{1/2}\widetilde{N})+I\bigr)({\rm i}\theta\times d_\theta^\varepsilon) +\varepsilon \big\{{\rm i}\theta \times\bigl(A^{1/2} N({\rm i}\theta\times d^\varepsilon_\theta)\bigr)\\[0.3em]
&\hspace{4cm}+\overline{e}_{\varepsilon\theta}\curl (e_{\varepsilon\theta}A^{1/2}R^\varepsilon_\theta)\big\}+\varepsilon^{-1}\overline{e}_{\varepsilon\theta}\curl(e_{\varepsilon\theta} A^{1/2}z^\varepsilon_\theta).
\end{align*}
Here $d_\theta^\varepsilon=d_\theta^\varepsilon(J^\varepsilon_\theta)$ is defined in \eqref{dtheta_eq_gen}, $\widetilde{N}$ is defined by (\ref{cell_pbN_general}), $R^\varepsilon_\theta$ solves \eqref{R_eq_general} with $G=J^\varepsilon_\theta$, and $z^\varepsilon_\theta$ solves (\ref{z_eq_gen}). As a consequence of (\ref{Rcurl_general}) and the second estimate in (\ref{z_estimate_gen}), the following result holds.
\begin{thm}
There exists $C>0$ independent of $J,$ $\theta$, $\varepsilon$ such that 
\begin{align}
\bigl\| B^\varepsilon_\theta - \bigl(\curl(A^{1/2}\widetilde{N}) +I\bigr)\bigl({\rm i}\theta\times d_\theta^\varepsilon(J^\varepsilon_\theta)\bigr)\bigr\|_{L^2(Q, d\mu)}\leq C\varepsilon\|J^\varepsilon_\theta\|_{L^2(Q,d\mu)},
\label{main_est_B_gen}
\end{align}
\end{thm}
Applying the inverse Floquet transform to \eqref{main_est_B_gen}, we obtain the following operator-norm estimate for the magnetic induction component $B^\varepsilon$ of the solution of \eqref{Max_general}. 
\begin{cor}
There exists a constant $C>0$ such that, for all $\varepsilon>0$ and divergence-free $J\in L^2({\mathbb R}^3, d\mu^\varepsilon),$ 
\begin{equation}
	\begin{aligned}
\biggl\| B^\varepsilon-(2\pi)^{-3/2} \biggl\{\curl \bigl(A^{1/2}\widetilde{N}\bigr)\biggl(\frac{\cdot}{\varepsilon}\biggr)+I\biggr\}\curl\int_{\R^3}  
 \big(\mathfrak{\widetilde{A}}_\theta^{\rm hom}&+{A}^{\rm hom}_{\varepsilon\theta}\big)^{-1} \widehat{J}(\theta) e_\theta  d\theta\biggr\|_{L^2(\R^3, d\mu^\varepsilon)}\\[0.4em]
 &\leq C \varepsilon \|J\|_{L^2(\R^3, d\mu^\varepsilon)},
 \end{aligned}
\label{B_est_final}
\end{equation}
Here $\mathfrak{\widetilde{A}}_\theta^{\rm hom},$ $\theta\in{\mathbb R}^3,$ and $A^{\rm hom}_\varkappa,$ $\varkappa\in[-\pi,\pi)^3,$ are defined by \eqref{Atheta_tilde} and \eqref{A_hom_hat}, respectively.
\end{cor}

\subsection{Comparison to the formal expansion approach}
\label{comparison_sec}

Suppose that $\widetilde{A}=1$ (i.e. the magnetic permeability is that of vacuum) and $\mu$ is the Lebesgue measure on $Q.$ Then, according to \eqref{cell_pbN_general},
one has $\widetilde{N}=0,$ $\widetilde{A}^{\rm hom}=I,$ and so (see \eqref{Atheta_tilde}) 
$(\widetilde{\mathfrak A}^{\rm hom}_\theta)_{ij}=\varepsilon_{isk}({\rm i}\theta)_s\varepsilon_{ktj}({\rm i}\theta)_t,$ $i,j=1,2,3.$
The estimate \eqref{B_est_final} takes the form 
\begin{equation}
		\biggl\| B^\varepsilon-(2\pi)^{-3/2}\curl\int_{\R^3}  
		\big(\widetilde{\mathfrak{A}}^{\rm hom}_\theta+A^{\rm hom}_{\varepsilon\theta}\big)^{-1}\widehat{J}(\theta) e_\theta  d\theta\biggr\|_{L^2(\R^3, d\mu^\varepsilon)}
		\leq C \varepsilon \|J\|_{L^2(\R^3, d\mu^\varepsilon)},
\label{B_est_final1}
\end{equation}

Note that \eqref{B_est_final1} involves a pseudo-differential operator with a ``two-scale" symbol that depends on $\theta$ and $\varepsilon\theta$.  
Formally setting $\varepsilon=0$ in \eqref{B_est_final1} yields
\begin{equation}
	(2\pi)^{-3/2}\curl\int_{\R^3}\bigl(\widetilde{\mathfrak A}_\theta^{\rm hom}+A^{\rm hom}_0\bigr)^{-1}\widehat{J}(\theta) e_\theta(\cdot)d\theta
	=\curl\bigl(\curl\curl+(A^{\rm hom})^{-1}\bigr)^{-1}
	J,
	\label{Bapprox_reduced}
\end{equation}
where $A^{\rm hom}$ is the standard homogenised inverse electric permittivity, see (\ref{Max_general_homo}). The ``homogenised" equation for the Maxwell system in the whole space \eqref{Max_general_homo} obtained by two-scale expansion (where $\widetilde{A}^{\rm hom}=I$), we have 
$\big(\!\curl\curl+(A^{\rm hom})^{-1}\big)^{-1}J=-A^{\rm hom}D^{\rm hom}.$
Hence, the formal approximation $B^{\rm hom}$ 
obtained from \eqref{Max_general_homo} takes the form
\[
B^{\rm hom}=-\curl(A^{\rm hom}
{D}^{\rm hom})=\curl\big(\!\curl\curl+(A^{\rm hom})^{-1}\big)^{-1}J,
\]
which coincides with (\ref{Bapprox_reduced}).

In other words, the approximation for magnetic induction $B^\varepsilon$ obtained formally by setting $\varepsilon=0$ in \eqref{B_est_final} coincides with that obtained na\"{\i}vely by using the classical two-scale asymptotic ansatz.


\renewcommand{\theequation}{A.\arabic{equation}}
\renewcommand{\thesubsection}{A.\arabic{subsection}}
\renewcommand{\thethm}{B.\arabic{thm}}
\setcounter{equation}{0}
\setcounter{equation}{0}
\setcounter{thm}{0}

\section*{Appendix A: Formal two-scale asymptotics}

	 Suppose $\widetilde{A}=1,$ as in Section \ref{comparison_sec}. Then the equation for the magnetic induction $B^\varepsilon$ (see \eqref{Max_general}) takes the form
	\begin{equation}
		\label{MaxII}
		\curl\bigl(A(\cdot/\varepsilon)\curl B^\varepsilon\bigr) +B^\varepsilon=\curl\bigl(A(\cdot/\varepsilon)J\bigr)
	\end{equation}
	
	The standard approach is to seek the two-scale expansion 
	\[
	B^\varepsilon=\sum_{n=0}^\infty\varepsilon^nB_n\biggl(x, \frac{x}{\varepsilon}\biggr),\qquad B_n(x, y) \text{\ is\  periodic\ in\  }y,\quad n=0,1,2,\dots.
	\]
	
	Formally substituting into (\ref{MaxII}) and applying $\curl=\curl_x+\varepsilon^{-1}\curl_y$ yields, after rearranging the terms by similar powers of $\varepsilon,$ to the leading order: 
	\[
	\curl_y\bigl(A(y)\curl_yB_0(x, y)\bigr)=0,\quad \divv_y B_0(x, y)=0,
	\]
	hence $B_0$ is independent of $y,$ and one can write $B_0=B_0(x).$ To the next order in $\varepsilon,$ one obtains
	\begin{align*}
		&\curl_y\bigl(A(y)\curl_yB_1(x, y)\bigr)=-\curl_y \bigl(A(y)\curl_xB_0(x)\bigr)+\curl_y\bigl(A(y)J(x)\bigr),\\[0.3em]
		&\divv_y B_1(x, y)+\divv B_0(x)=0,
	\end{align*}
	and the condition of periodicity in $y$ implies
	$\divv B_0=0,$ hence $\divv_y B_1(x, y)=0.$ Solving for $B_1$ yields
	\[
	B_1(x, y)=N(y)\bigl(\curl B_0(x)-J(x)\bigr),
	\]
	where the matrix-valued function  $N$ solves
	\[
	\curl\bigl\{A\bigl(\curl N+I\bigr)\bigr\}=0,\qquad \divv N=0,\qquad \int_Q N=0.
	\]
	
Finally, the solvability condition for
	\begin{align*}
		\curl_y\bigl(A(y)\curl_yB_2(x, y)\bigr)=&-\curl_x \bigl(A(y)\curl_yB_1(x,y)\bigr)-\curl_y \bigl(A(y)\curl_xB_1(x,y)\bigr)\\[0.3em]
		&-\curl_x \bigl(A(y)\curl_xB_0(x)\bigr)-B_0(x)+\curl_x\bigl(A(y)J(x)\bigr)
	\end{align*}
	results in the equation 
	$\curl\bigl(A^{\rm hom}\curl B_0\bigr)+B_0=\curl\bigl(A^{\rm hom}J\bigr),$
	where 
	\[
	A^{\rm hom}:=\int_QA\bigl(\curl N+I\bigr)d\mu,
	\]
	are ``homogenised'' coefficients.

\section*{Acknowledgments}
KC is grateful for the support of
the Engineering and Physical Sciences Research Council (UK), under the grant EP/L018802/2 ``Mathematical foundations of metamaterials: homogenisation, dissipation and operator theory''.

\end{document}